\newtheorem{theorem}{Theorem}[section]
\newtheorem{lemma}[theorem]{Lemma}
\newtheorem{definition}{Definition}[section]
\newtheorem{corollary}[theorem]{Corollary}
\newtheorem{example}{Example}[section]
\newtheorem{remark}[theorem]{Remark}
\definecolor{Gray}{gray}{0.9}
\begin{document}
\setcounter{page}{1}

\begin{center}
{\LARGE \bf  On Sombor coindex of graphs}
\vspace{8mm}

{\bf Phanjoubam Chinglensana$^1$, Sainkupar Mn Mawiong$^2$ }
\vspace{3mm}

$^1$Department of Mathematics, North-Eastern Hill University,\\  
NEHU Campus, Shillong-793022, INDIA\\
e-mail: \url{phanjoubam17@gmail.com}
\vspace{2mm}

$^2$ Department of Basic Sciences and Social Sciences, North-Eastern Hill University,\\
  NEHU Campus, Shillong-793022, INDIA\\
e-mail: \url{skupar@gmail.com}
\vspace{2mm}

\monthdayyeardate\today

\end{center}
\vspace{10mm}

{\bf Abstract:}
Motivated by the recently introduced topological index, the Somber index, we define a new topological index of a graph in this paper, we call it Sombor coindex. The Sombor coindex is defined by considering analogous contributions from the pairs of non-adjacent vertices, capturing, thus, and quantifying a possible influence of remote pairs of vertices. We give several properties of the Somber coindex and its relations to the Sombor index, the Zagreb (co)indices, forgotten coindex and other important graph parameters. We also compute the bounds of the Somber coindex of some graph operations and compute the Sombor coindex for some chemical graphs as an application.\\
{\bf Keywords:} Sombor index; Sombor coindex; Graph operations.\\
{\bf 2010 Mathematics Subject Classification:} 05C07, 05C90.
\vspace{5mm}

\section{Introduction}
Chemical graph theory generally considers various graph-theoretical invariants of molecular graphs (also known as topological indices or molecular descriptors), and study how strongly are they correlated with various properties of the corresponding molecules. Along this line of approach, numerous graph invariants have been employed with varying degree of success in QSAR (quantitative structure-activity relationship) and QSPR (quantitative structure-property relationship) studies. The Zagreb indices are among the more useful invariants and they are defined as sums of contributions dependent on the degrees of adjacent vertices over all edges of a graph. The Zagreb indices of a graph $G$, i.e., the first Zagreb index $M_1(G)$ and the second Zagreb index $M_2(G)$, were originally defined as follows.\\
$$M_1(G)=\sum_{u\in V(G)} {d_G(u)}^2; \ \ M_2(G)=\sum_{uv\in E(G)} d_G(u)d_G(v).$$
The first Zagreb index of $G$ can also be expressed as $$M_1(G)=\sum_{uv\in E(G)} [d_G(u)+d_G(v)].$$
The Zagreb indices can be viewed as the contributions of pairs of adjacent vertices to additively and multiplicatively weighted versions of Wiener numbers and polynomials {\cite9}. When computing the weighted Wiener polynomials of certain composite graphs, similar contributions of non-adjacent pairs of vertices had to be taken into account {\cite6}. These quantities were called Zagreb coindices since the defining sums run over the edges of the complement of $G$ although the degrees of the vertices are with respect to the graph itself. The first Zagreb coindex $\overline{M}_1(G)$ and the second Zagreb coindex $ \overline{M}_2(G) $ are defined formally in {\cite6} as follows.
$$\overline{M}_1(G)=\sum_{uv\notin E(G)} [d_G(u)+d_G(v)]; \ \ \overline{M}_2(G)=\sum_{uv\notin E(G)} d_G(u)d_G(v).$$
Generalised version of the first Zagreb index have also been introduced and is known as the general first Zagreb index which is defined as 
$$M_1^p (G)=\sum_{u\in V(G)} {d_G(u)}^p.$$ When $p=3$, $M_1^3 (G) =\displaystyle\sum_{u\in V(G)} {d_G(u)}^3$ is known as the forgotten index, denoted by $F(G)$, and is also equal to 
$$F(G)=\sum_{uv\in E(G)}[ {d_G(u)}^2+ {d_G(v)}^2].$$ The forgotten coindex of a graph $G$ is defined \cite5 as $$\overline{F}(G)=\sum_{uv\notin E(G)} [{d_G(u)}^2+{d_G(v)}^2].$$

Several properties of the Zagreb coindices have been explored and the Zagreb coindices for some derived graphs and several graph operations have also been studied recently ({\cite1}, {\cite{11}}). In \cite5, forgotten coindex of some graph operations are obtained and in \cite2 relations between this invariant and some well-known graph invariants are explored. Motivated by such results, we consider in this paper the recently introduced topological index of a graph, the Sombor index. We define, analogous to Zagreb coindices, the Sombor coindex of a graph. We give several properties of the Sombor coindex and its relations to the Sombor index, Zagreb (co)indices, forgotten coindex and other important graph parameters. Since several complicated (and important) graphs often arise from simpler graphs via some graph operations, we also present the Sombor coindex of some graph operations.
The paper is arranged as follows: In section 2, we recall the definition of the Sombor index and we define the Sombor coindex along with some examples. In section 3, we give several properties of the Sombor coindex and its bounds in terms of important graph parameters. In section 4, we explore the relations between Sombor coindex and other topological coindices: Zagreb coindices and forgotten coindex. In section 5, we present the Sombor coindex of some graph operations and we compute the Sombor coindex of some (chemical) graphs as application.

We consider only finite simple graph in this paper. Let $G$ be a finite simple graph on $n$ vertices and $m$ edges. We denote the vertex set and the edge set of $G$ by $V(G)$ and $E(G)$, respectively. The complement of $G$, denoted by $\overline{G}$, is a simple graph on $V(G)$ in which two vertices $u$ and $v$ are adjacent, i.e., joined by an edge $uv$, if and only if they are not adjacent in $G$. Hence, $uv\in E(\overline{G})$ if and only if $uv\notin E(G)$. Clearly, $E (G)\cup E(\overline{G}) = E(K_n )$, where $K_n$ is the complete graph on $n$ vertices and $\overline{m} =|E(\overline{G})| = {n\choose2}- m.$ The degree of a vertex $u$ in $G$ is denoted by $d_G(u)$. Then $d_{\overline{G}}(u) = n-1-d_G(u)$. Let $\Delta$ and $\delta$ denote the maximum vertex degree and the minimum vertex degree of the graph $G$, respectively.
\section{Sombor coindex of graphs}
Motivated by the geometric interpretation of the degree radius of an edge $uv$, which is the distance from the origin to the ordered pair $(d_G(u),d_G(v))$, where $d_G(u) \leq d_G(v)$, I. Gutman recently introduced a new vertex-degree-based molecular structure descriptor, the Sombor index, which is defined as $$SO(G)=\sum_{uv\in E(G)} \sqrt{{d_G(u)}^2+{d_G(v)}^2}.$$
We present some examples of the Somber index. For more details we refer the reader to {\cite8}.
\begin{example} 
\begin{enumerate}[label=(\roman*)]
\item $SO(K_n)=n(n-1)^2/\sqrt{2}$ and $SO(\overline{K_n}) =0.$
\item  For the cycle $C_n$ ($n\geq3$), $SO(C_n)=2\sqrt{2}n$.
\item Notice that $SO(P_2)=SO(K_2)=\sqrt{2}$. For $n\geq3$, $SO(P_n)=2(n-3)\sqrt{2}+2\sqrt{5}.$
\end{enumerate}
\end{example}
\begin{definition} The Sombor coindex of $G$ is defined as $$\overline{SO}(G)=\sum_{uv\notin E(G)} \sqrt{{d_G(u)}^2+{d_G(v)}^2}.$$
\end{definition}
\begin{remark} The Sombor coindex of $G$ is also expressed as $$\overline{SO}(G)=\sum_{uv\in E(\overline{G})} \sqrt{{d_G(u)}^2+{d_G(v)}^2}.$$
Notice that in the definition of Sombor coindex of $G$, the defining sums run over $E(\overline{G})$ but the degrees of the vertices are with respect to $G$. It is, therefore, not equal to the Sombor index of $\overline{G}$.
\end{remark}
\begin{example}
\begin{enumerate}[label=(\roman*)]
\item In the case of complete graphs, the defining sums in the Sombor coindex are taken over the empty set of edges and in the case of empty graphs, all degrees are zero.
$$\overline{SO}(K_n)=\overline{SO}(\overline{K_n}) =0 .$$
\item Notice that the complement of the cycle $C_n$ ($n\geq3$) has $n(n-3)/2$ edges and since it is a 2-regular graph, the Sombor coindex of the cycle $C_n$ is given by $\overline{SO}(C_n)=n(n-3)\sqrt{2}.$
\item By an $(x,y)$-edge of a complement graph $\overline{G}$, we mean an edge $e=uv$ where $d_G(u)=x$ and $d_G(v)=y$. Let $n\geq3$. The complement of the path $P_n$ has only one $(1,1)$-edge, $2(n-3)$ number of $(1,2)$-edges and $n-4 \choose 2$ number of $(2,2)$-edges. Thus, $$\overline{SO}(P_n)=[(n-4)(n-3)+1]\sqrt{2}+2(n-3)\sqrt{5}.$$ 
\end{enumerate}
\end{example}
\section{Properties of Sombor coindex of graphs}
In this section, we give some basic properties of $\overline{SO}(G)$ and we also give several bounds on $\overline{SO}(G)$ in terms of some useful graph parameters. From the definition of Sombor coindex, the following observations can be made immediately.
\begin{remark}
For a graph $G$, we have
\begin{enumerate}[label=(\roman*)]
\item $\overline{SO}(G-e)>\overline{SO}(G)$, where $e$ is any edge in $G$,
\item $\overline{SO}(G+e)<\overline{SO}(G)$, where the edge $e=uv$ and $u$ is not adjacent to $v$ in $G$.
\end{enumerate}
\end{remark}

First we give the upper and lower bounds on $\overline{SO}(G)$ in terms of $n$, $\Delta$ and $\delta$.
\begin{theorem} Let $G$ be a graph on $n$ vertices and $m$ edges. Then
$$\dfrac{\delta n}{\sqrt{2}}\left(n-1-\Delta\right)\leq \overline{SO}(G) \leq \dfrac{\Delta n}{\sqrt{2}}\left(n-1-\delta\right).$$
Equality holds if $G$ is a regular graph.
\end{theorem}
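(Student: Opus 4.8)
The plan is to bound the Sombor coindex by controlling a single summand and then counting the number of summands. First I would observe that for any pair of non-adjacent vertices $u,v$ the individual degrees satisfy $\delta \le d_G(u), d_G(v) \le \Delta$, so that $2\delta^2 \le d_G(u)^2 + d_G(v)^2 \le 2\Delta^2$. Taking square roots gives the termwise estimate
$$\delta\sqrt{2} \le \sqrt{d_G(u)^2 + d_G(v)^2} \le \Delta\sqrt{2}.$$
Since the defining sum of $\overline{SO}(G)$ runs over the $\overline{m} = \binom{n}{2} - m$ edges of $\overline{G}$, summing this inequality over all non-adjacent pairs immediately yields
$$\sqrt{2}\,\delta\,\overline{m} \le \overline{SO}(G) \le \sqrt{2}\,\Delta\,\overline{m}.$$

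Next I would convert $\overline{m}$ into the required closed form using the handshaking relation $\sum_{u \in V(G)} d_G(u) = 2m$. Since every degree lies between $\delta$ and $\Delta$, this gives $n\delta \le 2m \le n\Delta$. For the lower bound I would write $\overline{m} = \tfrac{n(n-1)}{2} - m$ and apply $2m \le n\Delta$:
$$\overline{SO}(G) \ge \sqrt{2}\,\delta\left(\frac{n(n-1)}{2} - m\right) = \frac{\delta n(n-1)}{\sqrt{2}} - \sqrt{2}\,\delta\, m \ge \frac{\delta n(n-1)}{\sqrt{2}} - \frac{\delta n \Delta}{\sqrt{2}} = \frac{\delta n}{\sqrt{2}}(n-1-\Delta).$$
For the upper bound I would symmetrically use $2m \ge n\delta$ to obtain $-\sqrt{2}\,\Delta\, m \le -\tfrac{\Delta n\delta}{\sqrt{2}}$, which produces $\overline{SO}(G) \le \tfrac{\Delta n}{\sqrt{2}}(n-1-\delta)$.

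The only point requiring attention is that the two bounds invoke the degree-sum inequality in opposite directions: the lower bound on $\overline{SO}(G)$ needs the upper estimate $2m \le n\Delta$, whereas the upper bound on $\overline{SO}(G)$ needs the lower estimate $2m \ge n\delta$. Keeping track of this crossing is the only genuine (though minor) subtlety; the remaining manipulations are routine algebra with $\sqrt{2}$.

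Finally I would verify the equality claim. If $G$ is $r$-regular then $\Delta = \delta = r$, every summand equals $r\sqrt{2}$, and $\overline{m} = \tfrac{n(n-1-r)}{2}$, so $\overline{SO}(G) = \tfrac{rn(n-1-r)}{\sqrt{2}}$, which coincides with both displayed bounds. This confirms that regularity forces equality throughout.
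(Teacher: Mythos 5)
Your proof is correct and follows essentially the same route as the paper: bound each summand of $\overline{SO}(G)$ termwise between $\sqrt{2}\,\delta$ and $\sqrt{2}\,\Delta$, then use the handshaking lemma to estimate $\overline{m}$ via $n\delta \le 2m \le n\Delta$. The only difference is that you write out the lower bound explicitly (the paper dismisses it as ``similarly obtained'') and correctly flag the minor subtlety that the two bounds use the degree-sum inequality in opposite directions.
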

\begin{proof} We just prove the upper bound as the lower bound can be similarly obtained. From the definition of the Sombor coindex, we have
$$\overline{SO}(G)=\sum_{uv\notin E(G)} \sqrt{{d_G(u)}^2+{d_G(v)}^2}=\sum_{uv\in E(\overline{G})} \sqrt{{d_G(u)}^2+{d_G(v)}^2}
\leq \sqrt{2}\Delta \overline{m},$$ where the equality holds if $d_G(u)=\Delta$ for each $u\in V(G)$ i.e., $G$ is regular. 
Notice that $2\overline{m}=n(n-1)-2m$. And by the Handshaking lemma, we have 
$$2m=\sum_{u\in V(G)} d_G(u)\geq n\delta,$$ where the equality holds if $G$ is regular. 
Thus, $2\overline{m}\leq n(n-1-\delta)$. It follows that $\overline{SO}(G)\leq \dfrac{\Delta n}{\sqrt{2}}\left(n-1-\delta\right),$ where the equality holds if $G$ is regular. 
\end{proof}
As corollary we can compute the Somber coindex of $r$-regular graphs.
\begin{corollary} Let $G$ be an $r$-regular graph on $n$ vertices. Then
 $$\overline{SO}(G)=\dfrac{nr(n-1-r)}{\sqrt{2}}.$$
\end{corollary}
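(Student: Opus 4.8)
The plan is to derive this as an immediate specialization of the preceding theorem. For an $r$-regular graph one has $\Delta = \delta = r$, so both the lower and the upper bound in the theorem reduce to the same quantity $\dfrac{rn}{\sqrt{2}}(n-1-r)$. Since the two bounds coincide, the value of $\overline{SO}(G)$ is squeezed to this common number, which is exactly the claimed formula.

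The only point that genuinely needs checking is that the equality conditions supplied by the theorem are actually met by a regular graph, so that the squeeze is legitimate rather than the two numerical bounds merely happening to agree. Reexamining the proof of the theorem, equality in the estimate $\sum \sqrt{d_G(u)^2+d_G(v)^2}\le \sqrt{2}\,\Delta\,\overline{m}$ requires every vertex to have degree $\Delta$, which holds precisely for regular graphs, and equality in the handshaking step $2m\ge n\delta$ again holds exactly for regular graphs. When $\Delta=\delta=r$ both conditions hold simultaneously, so the upper bound is attained; by the symmetric argument the lower bound is attained as well, and the interval collapses to a single value.

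Alternatively, I would give a one-line direct computation that sidesteps quoting the equality clause. In an $r$-regular graph every term of the defining sum equals $\sqrt{r^2+r^2}=r\sqrt{2}$, and the number of non-edges is $\overline{m}=\binom{n}{2}-m=\dfrac{n(n-1)}{2}-\dfrac{nr}{2}=\dfrac{n(n-1-r)}{2}$, where I have used $2m=nr$. Multiplying the constant summand by the number of non-edges yields $\overline{SO}(G)=r\sqrt{2}\cdot\dfrac{n(n-1-r)}{2}=\dfrac{nr(n-1-r)}{\sqrt{2}}$.

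There is no real obstacle here; the corollary is a routine consequence of the theorem. The only care required is the observation that regularity forces equality in both the Sombor estimate and the handshaking bound at once, which is what pins the interval down to a single point. I would favour the direct computation as the cleanest and most self-contained presentation.
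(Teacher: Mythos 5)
Your proposal is correct and follows the paper's (implicit) argument: the corollary is obtained by specializing the theorem to $\Delta=\delta=r$, where the two bounds collapse to the common value $\frac{nr(n-1-r)}{\sqrt{2}}$. Note that once the numerical bounds coincide, the squeeze is automatically legitimate, so your verification of the equality conditions is superfluous; your direct computation (constant summand $r\sqrt{2}$ times $\overline{m}=\frac{n(n-1-r)}{2}$ non-edges) is an equally valid, self-contained alternative.
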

\begin{remark} Let $G$ be a graph on $n$ vertices. Since $\Delta\leq n-1$, we have
$$\overline{SO}(G)\leq \dfrac{n(n-1)(n-1-\delta)}{\sqrt{2}}.$$
\end{remark}
Next, we have another upper bound for the Sombor coindex in terms of $\overline{m}$, $\delta$ and $\overline{M}_1(G)$.
\begin{theorem} \label{theorem1}
Let $G$ be a graph with $m$ edges. Then $$\overline{SO}(G) \leq \overline{M}_1(G)-(2-\sqrt{2})\delta \overline{m}.$$
Equality holds if $G$ is a regular graph.
\end{theorem}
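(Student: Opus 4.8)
The plan is to turn the inequality into a termwise comparison over the non-adjacent pairs of vertices. Both $\overline{SO}(G)$ and $\overline{M}_1(G)$ are sums indexed by the same set $E(\overline{G})$, so I would first rewrite the difference as
$$\overline{M}_1(G)-\overline{SO}(G)=\sum_{uv\in E(\overline{G})}\Bigl[\bigl(d_G(u)+d_G(v)\bigr)-\sqrt{d_G(u)^2+d_G(v)^2}\Bigr].$$
It then suffices to bound each summand from below by $(2-\sqrt{2})\delta$, because there are exactly $\overline{m}$ such terms.

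The key step is the elementary two-variable estimate
$$(a+b)-\sqrt{a^2+b^2}\ \geq\ (2-\sqrt{2})\delta\qquad\text{whenever } a,b\geq\delta,$$
which applies here since every vertex degree is at least $\delta$. To establish it I would study $f(a,b)=(a+b)-\sqrt{a^2+b^2}$ on the quadrant $a,b\geq\delta$. Its partial derivative $\partial f/\partial a=1-a/\sqrt{a^2+b^2}$ is nonnegative (as $a\leq\sqrt{a^2+b^2}$), and symmetrically in $b$, so $f$ is nondecreasing in each argument and hence minimized at the corner $a=b=\delta$, where $f(\delta,\delta)=2\delta-\sqrt{2}\,\delta=(2-\sqrt{2})\delta$. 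Substituting $a=d_G(u)$, $b=d_G(v)$ and summing over all $\overline{m}$ edges of $\overline{G}$ gives $\overline{M}_1(G)-\overline{SO}(G)\geq(2-\sqrt{2})\delta\,\overline{m}$, which rearranges to the claimed bound.

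For the equality discussion, I would observe that termwise equality forces $d_G(u)=d_G(v)=\delta$ for every non-adjacent pair; a regular graph has all degrees equal to $\delta$, so each term is exactly $(2-\sqrt{2})\delta$ and the bound is attained. The only real (and mild) obstacle is justifying the termwise inequality: everything hinges on the monotonicity of $f$ in each variable and on locating its minimum at the corner of the feasible region, after which the proof is a one-line summation.
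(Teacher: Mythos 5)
Your proposal is correct and follows essentially the same route as the paper: a termwise estimate over $E(\overline{G})$ showing that each gap $\bigl(d_G(u)+d_G(v)\bigr)-\sqrt{d_G(u)^2+d_G(v)^2}$ is at least $(2-\sqrt{2})\delta$, then summing over the $\overline{m}$ non-adjacent pairs. The paper obtains this from the algebraic inequality $\sqrt{a^2+b^2}\leq a+(\sqrt{2}-1)b$ for $a\geq b$ (equivalently, the gap is at least $(2-\sqrt{2})\min\{a,b\}\geq(2-\sqrt{2})\delta$), whereas you derive it via monotonicity of $f(a,b)=a+b-\sqrt{a^2+b^2}$ on the quadrant $a,b\geq\delta$; these are interchangeable one-line lemmas, so the two arguments coincide in substance.
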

\begin{proof}
By the definition of the Sombor coindex, we have
$$\overline{SO}(G)=\sum_{uv\notin E(G)} \sqrt{{d_G(u)}^2+{d_G(v)}^2}=\sum_{uv\in E(\overline{G})} \sqrt{{d_G(u)}^2+{d_G(v)}^2}.$$ Notice that for any $u$ and $v$ with $d_G(u)\geq d_G(v)$, we have $$\sqrt{{d_G(u)}^2+{d_G(v)}^2}\leq d_G(u)+(\sqrt{2}-1)d_G(v)$$ where the equality holds if $d_G(u)=d_G(v)$. Thus, 
\begin{align*}
\overline{SO}(G)\leq &\sum_{\underset{d_G(u)\geq d_G(v)} {uv\in E(\overline{G})}}[d_G(u)+(\sqrt{2}-1)d_G(v)]\\
=&\sum_{uv\in E(\overline{G})}[d_G(u)+d_G(v)]-\sum_{\underset{d_G(u)\geq d_G(v)} {uv\in E(\overline{G})}}(2-\sqrt{2})d_G(v)\\
\leq&  \  \overline{M}_1(G)-(2-\sqrt{2})\delta \overline{m}.
\end{align*}
Further, the equality holds if $G$ is a regular graph.
\end{proof}
In \cite{1} it is proven that $\overline{M}_1(G)=2m(n-1)- M_1(G)$ and thus, we have the following corollary.
\begin{corollary} Let $G$ be a graph on $n$ vertices and $m$ edges. Then
 $$\overline{SO}(G) \leq 2m(n-1) -M_1(G)-\left(1-\dfrac{1}{\sqrt{2}}\right)[n(n-1)-2m]\delta.$$
 Equality holds if $G$ is a regular graph.
\end{corollary}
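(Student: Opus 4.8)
The plan is to derive this corollary directly from Theorem~\ref{theorem1} by substituting the two stated identities and simplifying a single numerical coefficient. Theorem~\ref{theorem1} already gives the estimate $\overline{SO}(G) \leq \overline{M}_1(G)-(2-\sqrt{2})\delta \overline{m}$, together with the statement that equality holds when $G$ is regular, so the entire task reduces to rewriting the right-hand side in terms of $n$, $m$, $M_1(G)$, and $\delta$.

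First I would invoke the identity $\overline{M}_1(G)=2m(n-1)-M_1(G)$, quoted from \cite{1}, to replace the first Zagreb coindex. Next I would recall from the introductory material that $2\overline{m}=n(n-1)-2m$, equivalently $\overline{m}=\tfrac{1}{2}\left[n(n-1)-2m\right]$, and substitute this for $\overline{m}$ in the penalty term. At that point the bound reads
$$\overline{SO}(G)\leq 2m(n-1)-M_1(G)-(2-\sqrt{2})\,\delta\cdot\tfrac{1}{2}\left[n(n-1)-2m\right].$$

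The only real step is the coefficient simplification $\tfrac{1}{2}(2-\sqrt{2})=1-\tfrac{1}{\sqrt{2}}$, after which the expression matches the claimed inequality exactly. Since the manipulation is reversible and every substituted identity is an exact equality, the inequality and its direction are preserved verbatim from Theorem~\ref{theorem1}. Finally, the equality characterization transfers without change: if $G$ is regular, then Theorem~\ref{theorem1} attains equality, and the substitutions leave both sides unchanged, so equality holds here as well. I expect no genuine obstacle in this argument; the main thing to watch is bookkeeping of the factor of $\tfrac{1}{2}$ so that the constant $1-\tfrac{1}{\sqrt{2}}$ comes out correctly.
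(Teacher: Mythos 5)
Your proposal is correct and is exactly the derivation the paper intends: the corollary follows from Theorem~\ref{theorem1} by substituting $\overline{M}_1(G)=2m(n-1)-M_1(G)$ and $\overline{m}=\tfrac{1}{2}[n(n-1)-2m]$, with the coefficient simplification $\tfrac{1}{2}(2-\sqrt{2})=1-\tfrac{1}{\sqrt{2}}$ and the regularity equality condition carried over unchanged. Nothing is missing.
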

We have remarked that the Sombor coindex of a graph $G$ is not the same as the Sombor index of $\overline{G}$. However, they are related closely as follows.
\begin{theorem}  Let $G$ be a graph on $n$ vertices and $m$ edges. Then
\begin{enumerate}[label=(\roman*)]
\item $SO(G)+\overline{SO}(G)\leq \dfrac{n(n-1)\Delta}{\sqrt{2}}.$ Equality holds if $G$ is a regular graph.
\item $SO(\overline{G})+\overline{SO}(G)\leq \overline{m}(n-1+\Delta-\delta)\sqrt{2}.$ Equality holds if $G$ is a regular graph.
\end{enumerate}
\end{theorem}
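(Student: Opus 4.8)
The plan is to treat the two parts separately, in each case reducing the left-hand side to a single sum over a convenient edge set and then bounding the summands uniformly.

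For part (i), the key observation is that $E(G)$ and $E(\overline{G})$ partition the edge set of $K_n$, so every unordered pair of distinct vertices is counted exactly once across the two sums. Writing out $SO(G)+\overline{SO}(G)$ using the definitions of the Sombor index and coindex, I would combine them into a single sum
$$SO(G)+\overline{SO}(G)=\sum_{\{u,v\}\subseteq V(G),\, u\neq v}\sqrt{d_G(u)^2+d_G(v)^2}.$$
Since $d_G(u),d_G(v)\leq\Delta$, each summand is at most $\sqrt{\Delta^2+\Delta^2}=\sqrt{2}\,\Delta$, and there are exactly $\binom{n}{2}=n(n-1)/2$ pairs. Multiplying gives the claimed bound $n(n-1)\Delta/\sqrt{2}$, with equality precisely when $d_G(u)=\Delta$ for every vertex, i.e.\ when $G$ is regular.

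For part (ii), the crucial point is that, by the Remark following the definition, $\overline{SO}(G)$ is a sum over $E(\overline{G})$ with degrees taken in $G$, while $SO(\overline{G})$ is a sum over $E(\overline{G})$ with degrees taken in $\overline{G}$. Both sums therefore run over the same index set $E(\overline{G})$, so I can merge them termwise into
$$SO(\overline{G})+\overline{SO}(G)=\sum_{uv\in E(\overline{G})}\Big(\sqrt{d_{\overline{G}}(u)^2+d_{\overline{G}}(v)^2}+\sqrt{d_G(u)^2+d_G(v)^2}\Big).$$
Then I would apply the elementary inequality $\sqrt{a^2+b^2}\leq\sqrt{2}\max(a,b)$ (valid for $a,b\geq0$, with equality iff $a=b$) to each square root. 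The second gives $\sqrt{d_G(u)^2+d_G(v)^2}\leq\sqrt{2}\,\Delta$; for the first, using $d_{\overline{G}}(w)=n-1-d_G(w)$ yields $\sqrt{d_{\overline{G}}(u)^2+d_{\overline{G}}(v)^2}\leq\sqrt{2}\,(n-1-\delta)$. Summing the two bounds over the $\overline{m}$ edges of $\overline{G}$ produces $\overline{m}(n-1+\Delta-\delta)\sqrt{2}$.

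I expect no deep obstacle; the only care needed is bookkeeping the index sets so that the termwise merge in part (ii) is justified, and tracking the equality conditions. For regular $G$ of degree $r$ one has $\Delta=\delta=r$ and $d_{\overline{G}}\equiv n-1-r$, so every application of $\sqrt{a^2+b^2}\leq\sqrt{2}\max(a,b)$ is sharp (the two arguments coincide) and every degree attains the relevant extremum; this confirms equality in both parts. It would also be worth double-checking consistency of part (ii) against the regular case by substituting the value $\overline{SO}(G)=nr(n-1-r)/\sqrt{2}$ from the Corollary.
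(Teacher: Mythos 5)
Your proposal is correct and follows essentially the same route as the paper: in both parts the summands are bounded termwise by $\sqrt{2}\,\Delta$ (respectively $\sqrt{2}\,(n-1-\delta)$ for the complement degrees via $d_{\overline{G}}(w)=n-1-d_G(w)$), and the edge counts $m+\overline{m}=\binom{n}{2}$ and $\overline{m}$ finish the argument. Whether one merges the two sums before bounding (your version) or bounds each sum separately and then adds (the paper's version) is a purely cosmetic difference.
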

\begin{proof}
By the definition of the Sombor index and the Sombor coindex, we have
\begin{equation}
\label{eqn:eq1}
SO(G)=\sum_{uv\in E(G)} \sqrt{{d_G(u)}^2+{d_G(v)}^2}\leq m\sqrt{2}\Delta
\end{equation}
\begin{equation}
\label{eqn:eq2}
\overline{SO}(G)=\sum_{uv\in E(\overline{G})} \sqrt{{d_G(u)}^2+{d_G(v)}^2}\leq \overline{m}\sqrt{2}\Delta
\end{equation}
From (\ref{eqn:eq1}) and (\ref{eqn:eq2}), we have
$$SO(G)+\overline{SO}(G)\leq(m+\overline{m})\sqrt{2}\Delta\leq {n\choose2}\sqrt{2}\Delta=\dfrac{n(n-1)\Delta}{\sqrt{2}}.$$
Further, it is easy to see that the equality holds if $G$ is a regular graph. This proves the first part. For the second part, notice that $d_{\overline{G}}(u) = n-1-d_G(u)$. Hence
\begin{align}
\label{eqn:eq3}
SO(\overline{G})=&\sum_{uv\in E(\overline{G})} \sqrt{{d_{\overline{G}}(u)}^2+{d_{\overline{G}}(v)}^2}\notag\\
=& \sum_{uv\in E(\overline{G})} \sqrt{(n-1-d_G(u))^2+(n-1-d_G(v))^2}\notag\\
\leq & \ \overline{m}(n-1-\delta)\sqrt{2}
\end{align}
From (\ref{eqn:eq2}) and (\ref{eqn:eq3}), we have $$SO(\overline{G})+\overline{SO}(G)\leq \overline{m}(n-1+\Delta-\delta)\sqrt{2}.$$ Moreover, the equality holds if $G$ is a regular graph. This completes the proof.
\end{proof}

\begin{theorem}
Let $G$ be a graph with $m$ edges. Then
$$\overline{SO}(G)+\overline{SO}(\overline{G})\leq 2\overline{M}_1(G)-(2-\sqrt{2})\delta{n \choose 2}.$$
 Equality holds if $G$ is a regular graph.
\end{theorem}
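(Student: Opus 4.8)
The plan is to obtain the bound by applying Theorem~\ref{theorem1} twice, once to $G$ and once to its complement $\overline{G}$, and then adding the two resulting estimates. The application to $G$ is immediate and gives $\overline{SO}(G)\le \overline{M}_1(G)-(2-\sqrt2)\delta\,\overline{m}$, so the real work is to apply the same theorem to $\overline{G}$ and to re-express every quantity attached to $\overline{G}$ in terms of the data of $G$.

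The key preliminary step is the identity $\overline{M}_1(\overline{G})=\overline{M}_1(G)$, which is what produces the coefficient $2$ in front of $\overline{M}_1(G)$. It is short: since $\overline{\overline{G}}=G$ and $d_{\overline{G}}(w)=n-1-d_G(w)$ for every vertex $w$,
$$\overline{M}_1(\overline{G})=\sum_{uv\in E(G)}\bigl[d_{\overline{G}}(u)+d_{\overline{G}}(v)\bigr]=2(n-1)m-\sum_{uv\in E(G)}\bigl[d_G(u)+d_G(v)\bigr]=2(n-1)m-M_1(G),$$
which equals $\overline{M}_1(G)$ by the identity $\overline{M}_1(G)=2m(n-1)-M_1(G)$ recalled earlier. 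Hence the two principal terms coming from the two applications of Theorem~\ref{theorem1} add to exactly $2\overline{M}_1(G)$.

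It then remains to assemble the subtracted terms. Applying Theorem~\ref{theorem1} to $\overline{G}$ contributes $-(2-\sqrt2)\,\delta_{\overline{G}}\,\overline{m_{\overline{G}}}$, where $\overline{m_{\overline{G}}}=|E(\overline{\overline{G}})|=|E(G)|=m$ is the number of edges over which the coindex of $\overline{G}$ is summed. Together with $-(2-\sqrt2)\delta\,\overline{m}$ from the first application, and using $\overline{m}+m={n\choose2}$, the subtracted quantity takes the form $(2-\sqrt2)\bigl[\delta\,\overline{m}+\delta_{\overline{G}}\,m\bigr]$, and the stated bound follows once this is shown to be at least $(2-\sqrt2)\delta{n\choose2}=(2-\sqrt2)\delta(\overline{m}+m)$.

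I expect this final comparison to be the main obstacle, since the quantity that naturally appears is the minimum degree of the complement, $\delta_{\overline{G}}=n-1-\Delta$, rather than $\delta$ itself, and it must be controlled against $\delta$ on the $m$ edges of $G$. The safest route is to return to the vertex-degree inequality $\sqrt{a^2+b^2}\le a+(\sqrt2-1)b$ (valid for $a\ge b\ge0$, with equality iff $a=b$) that underlies Theorem~\ref{theorem1}, apply it edge-by-edge to $E(\overline{G})$ with $G$-degrees and to $E(G)$ with $\overline{G}$-degrees, and thereby reduce the estimate to bounding, over all ${n\choose2}$ pairs, the smaller of the two relevant degrees. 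On a regular graph every degree in both $G$ and $\overline{G}$ is constant and the vertex inequality becomes an equality, so all the slack closes at once; this both explains the equality statement for regular $G$ and serves as the consistency check fixing the constants.
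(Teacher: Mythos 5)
Your reduction is exactly the paper's: apply Theorem~\ref{theorem1} to $G$ and to $\overline{G}$, add, and use $\overline{M}_1(\overline{G})=\overline{M}_1(G)$. But the step you flag as ``the main obstacle'' is a genuine gap, and it cannot be closed, because it is precisely the point where the paper's own proof goes wrong. Applying Theorem~\ref{theorem1} to $\overline{G}$ produces the subtracted term $(2-\sqrt2)\,\delta_{\overline{G}}\,m$ with $\delta_{\overline{G}}=n-1-\Delta$, exactly as you say; the paper silently writes $\delta$ in place of $\delta_{\overline{G}}$, which amounts to assuming $n-1-\Delta\ge\delta$, i.e.\ $\Delta+\delta\le n-1$. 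That is false in general, and so is the theorem as stated. Take $G=C_4$: then $\overline{SO}(C_4)=4\sqrt2$ and $\overline{SO}(\overline{C_4})=4\sqrt2$ (the complement is a perfect matching), so the left side is $8\sqrt2\approx 11.31$; but $\overline{M}_1(C_4)=8$, $\delta=2$ and ${4\choose2}=6$, so the right side is $16-12(2-\sqrt2)\approx 8.97$. The inequality fails, and since $C_4$ is regular the equality claim fails as well. (For an $r$-regular graph the two subtracted terms are $r\,\overline{m}$ and $(n-1-r)\,m$, which sum to $r{n\choose2}$ only when $r=(n-1)/2$.)

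What your argument does prove --- and this is the correct version of the statement --- is
$$\overline{SO}(G)+\overline{SO}(\overline{G})\le 2\overline{M}_1(G)-(2-\sqrt2)\bigl[\delta\,\overline{m}+(n-1-\Delta)\,m\bigr],$$
with equality when $G$ is regular, since then both applications of Theorem~\ref{theorem1} are tight ($\overline{G}$ is also regular). Your fallback plan of redoing the edge-by-edge estimate $\sqrt{a^2+b^2}\le a+(\sqrt2-1)b$ over $E(\overline{G})$ with $G$-degrees and over $E(G)$ with $\overline{G}$-degrees cannot recover the stated bound either: on the $m$ pairs that are edges of $G$, the smaller relevant degree is a $\overline{G}$-degree, bounded below only by $n-1-\Delta$, and no refinement substitutes $\delta$ for it, as the $C_4$ example shows. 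So your instinct was right: stop at the corrected inequality above rather than forcing the comparison $\delta\,\overline{m}+(n-1-\Delta)\,m\ge\delta{n\choose2}$, which is simply not true.
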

\begin{proof}
Applying Theorem \ref{theorem1} to $\overline{G}$, we have $$\overline{SO}(\overline{G}) \leq \overline{M}_1(\overline{G})-(2-\sqrt{2})\delta m.$$
 Equality holds if and only if $G$ is a regular graph.
Now, $$\overline{SO}(G)+\overline{SO}(\overline{G})\leq \overline{M}_1(G)-(2-\sqrt{2})\delta \overline{m}+ \overline{M}_1(\overline{G})-(2-\sqrt{2})\delta m.$$
It is proven in  \cite{1} that $\overline{M}_1(G)=\overline{M}_1(\overline{G})$. It follows that
\begin{align*}
\overline{SO}(G)+\overline{SO}(\overline{G})\leq & 2\overline{M}_1(G)-(2-\sqrt{2})(\overline{m}+ m)\delta \\
=&2\overline{M}_1(G)-(2-\sqrt{2})\delta{n \choose 2}.
\end{align*}
Moreover, the equality holds if $G$ is a regular graph. 
\end{proof}
\section{Relations between Sombor coindex and some coindices}
We now recall the following well-known inequality which is needed for our results concerning the relation of Sombor coindex with forgotten coindex and also with Zagreb coindices.
\begin{lemma}[P\'olya-Szeg\"o inequality \cite7] 
\label{lemma1}
Let $a_1,a_2,\dots,a_m$ and $b_1,b_2,\dots,b_m$ be two sequences of positive real numbers. If there exists real numbers $A,a,B$ and $b$ such that $0<a\leq a_k\leq A < \infty$ and $0<b\leq b_k\leq B < \infty$ for $k=1,2,\dots,m$ then $$ \dfrac{\displaystyle\sum_{k=1}^m a^2_k \displaystyle\sum_{k=1}^m b^2_k}{\left(\displaystyle\sum_{k=1}^m a_kb_k\right)^2}\leq \dfrac{(ab+AB)^2}{4abAB}$$ where the equality holds if and only if $$p=m\dfrac{A}{a}\bigg/{\left(\dfrac{A}{a}+\dfrac{B}{b}\right)}, \ q=m\dfrac{B}{b}\bigg/{\left(\dfrac{A}{a}+\dfrac{B}{b}\right)}$$ are integers and if $p$ of the numbers $a_1,a_2,\dots,a_m$ are equal to $a$ and $q$ of these numbers are equal to $A$, and if the corresponding numbers $b_k$ are equal to $B$ and $b$, respectively.
\end{lemma}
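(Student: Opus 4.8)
The statement is a reverse Cauchy--Schwarz (Kantorovich-type) inequality, so the plan is to exploit the boundedness of the ratios $a_k/b_k$ together with the arithmetic--geometric mean inequality. First I would set $t_k = a_k/b_k$ and observe that the hypotheses $0 < a \le a_k \le A$ and $0 < b \le b_k \le B$ force $t_k \in [\,a/B,\, A/b\,]$ for every $k$, and that $a/B \le A/b$ because $ab \le AB$. This single observation is the only genuinely ad hoc idea; everything afterward is bookkeeping.

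Next I would use that a number lying in an interval makes the product of its signed distances to the endpoints nonnegative, that is, $(t_k - a/B)(A/b - t_k) \ge 0$. Expanding and clearing denominators by multiplying through by $b_k^2 > 0$ yields, for each $k$, the estimate $a_k^2 + \tfrac{aA}{bB}\,b_k^2 \le \bigl(\tfrac{A}{b} + \tfrac{a}{B}\bigr) a_k b_k$. Summing over $k = 1, \dots, m$ gives $\sum a_k^2 + \tfrac{aA}{bB}\sum b_k^2 \le \bigl(\tfrac{A}{b} + \tfrac{a}{B}\bigr)\sum a_k b_k$. I would then bound the left-hand side from below by AM--GM, namely $\sum a_k^2 + \tfrac{aA}{bB}\sum b_k^2 \ge 2\sqrt{\tfrac{aA}{bB}\,\sum a_k^2 \sum b_k^2}$, combine the two displays, and square. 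A short simplification of $\bigl(\tfrac{A}{b}+\tfrac{a}{B}\bigr)^2 \big/ \bigl(4\tfrac{aA}{bB}\bigr)$ collapses exactly to $\tfrac{(ab+AB)^2}{4abAB}$, which is the claimed bound.

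The main obstacle is the equality analysis rather than the inequality itself. Equality forces two things at once: the per-term factorization $(t_k - a/B)(A/b - t_k) = 0$ for every $k$, and equality in AM--GM. The first condition, combined with the box constraints, pins each pair $(a_k, b_k)$ down to one of the two corners $(a, B)$ or $(A, b)$; the key point is that $t_k = a/B$ together with $b_k \le B$ forces $a_k = a$ and $b_k = B$, and symmetrically at the other endpoint. Writing $q$ for the number of pairs equal to $(A, b)$ and $p = m - q$ for the number equal to $(a, B)$, the AM--GM equality $\sum a_k^2 = \tfrac{aA}{bB}\sum b_k^2$ becomes a linear relation in $p$ and $q$ that, after dividing out the nonzero factor $ab - AB$, reduces to $p/q = (A/a)\big/(B/b)$. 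Solving this together with $p + q = m$ recovers the stated closed forms for $p$ and $q$ and the requirement that they be integers. I expect the degenerate configurations (the case $ab = AB$, or all $t_k$ sitting at a single endpoint) to need a brief separate remark, since there the factor $ab - AB$ vanishes and the counting argument must be handled by hand.
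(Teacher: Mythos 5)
Your proof is correct, but there is nothing in the paper to compare it against: the paper does not prove this lemma at all. It is imported verbatim as a classical result (the P\'olya--Szeg\"o inequality) with a citation to Dragomir's survey, and is then used as a black box in Section 4. Your argument is the standard self-contained derivation of this Kantorovich-type bound, and every step checks out: the ratio bounds $t_k=a_k/b_k\in[a/B,\,A/b]$ (with $a/B\le A/b$ since $ab\le AB$), the pointwise inequality obtained from $(t_k-a/B)(A/b-t_k)\ge 0$ after multiplying by $b_k^2$, the summation, the AM--GM lower bound $\sum a_k^2+\tfrac{aA}{bB}\sum b_k^2\ge 2\sqrt{\tfrac{aA}{bB}\sum a_k^2\sum b_k^2}$, and the final simplification $\bigl(\tfrac{A}{b}+\tfrac{a}{B}\bigr)^2\big/\bigl(4\tfrac{aA}{bB}\bigr)=\tfrac{(ab+AB)^2}{4abAB}$. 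The equality analysis is also sound: per-term equality together with the box constraints pins each pair to a corner ($t_k=a/B$ with $a_k\ge a$ and $b_k\le B$ forces $(a_k,b_k)=(a,B)$, and symmetrically), and AM--GM equality then gives $p\,\tfrac{a}{b}=q\,\tfrac{A}{B}$, i.e.\ $p/q=(A/a)\big/(B/b)$, which with $p+q=m$ recovers the stated formulas. Two points to tidy if you write this out in full: you only argue the necessity half of the ``if and only if'' (sufficiency --- that the corner configuration with these $p,q$ makes both the per-term inequalities and the AM--GM step equalities --- is a short direct computation and should be stated), and the degenerate case $ab=AB$ that you flag indeed collapses to $a=A$, $b=B$, where all admissible pairs coincide, both sides of the inequality equal $1$, and the counting statement becomes vacuous. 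In terms of trade-offs, your route makes the paper self-contained and makes the equality condition (which the authors repeatedly invoke in Section 4 in the weaker form ``equality holds if $G$ is regular'') completely transparent; the authors' citation buys brevity at the cost of the reader having to consult the source.
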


\begin{remark} The upper bound of Sombor coindex involving forgotten coindex is an easy consequence of Cauchy-Schwarz inequality. More precisely, for a graph with $m$ edges $\overline{SO}(G)\leq \sqrt{\overline{m} \overline{F}(G)}$, where the equality holds if $G$ is a regular graph. Here, we present a lower bound for $\overline{SO}$ which is still sharp for a regular graph.
\end{remark}
\begin{theorem} Let $G$ be a graph on $n$ vertices and $m$ edges. Then $$\sqrt{\overline{m}\overline{F}(G)}\leq \dfrac{1}{2} \left(\dfrac{\delta}{\Delta}+\dfrac{\Delta}{\delta}\right) \overline{SO}(G).$$
\end{theorem}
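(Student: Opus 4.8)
The plan is to read the left-hand side as the ``Cauchy--Schwarz product'' over the non-adjacent pairs and to reverse the Cauchy--Schwarz estimate of the preceding Remark by invoking the P\'olya--Szeg\H{o} inequality (Lemma \ref{lemma1}). Assume $G$ is not complete (otherwise $\overline{m}=0$ and both sides vanish). Index the sum by the $\overline{m}$ edges $uv\in E(\overline{G})$ and set
$$a_k=\sqrt{d_G(u)^2+d_G(v)^2},\qquad b_k=1.$$
With this choice one reads off at once that $\sum_k a_k^2=\overline{F}(G)$, that $\sum_k b_k^2=\overline{m}$, and that $\sum_k a_k b_k=\overline{SO}(G)$, so the three quantities in the statement are precisely the three sums controlled by Lemma \ref{lemma1}.

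Next I supply admissible bounds. Since every degree lies in $[\delta,\Delta]$, we have $d_G(u)^2+d_G(v)^2\ge 2\delta^2$ and $\le 2\Delta^2$, so $a:=\sqrt{2}\,\delta\le a_k\le \sqrt{2}\,\Delta=:A$, while $b:=1\le b_k\le 1=:B$ is trivial. Feeding $a,A,b,B$ into Lemma \ref{lemma1} gives
$$\frac{\overline{F}(G)\,\overline{m}}{\overline{SO}(G)^2}\le \frac{(ab+AB)^2}{4abAB}=\frac{\bigl(\sqrt{2}\,\delta+\sqrt{2}\,\Delta\bigr)^2}{4\cdot 2\delta\Delta}=\frac{(\delta+\Delta)^2}{4\delta\Delta}.$$
Taking square roots (all quantities being positive) yields the sharp intermediate estimate $\sqrt{\overline{m}\,\overline{F}(G)}\le \tfrac{\delta+\Delta}{2\sqrt{\delta\Delta}}\,\overline{SO}(G)$.

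The final step is to pass from this constant to the one asserted. I would verify the elementary inequality $\tfrac{\delta+\Delta}{2\sqrt{\delta\Delta}}\le \tfrac12\bigl(\tfrac{\delta}{\Delta}+\tfrac{\Delta}{\delta}\bigr)$, which is equivalent to $(\delta+\Delta)\sqrt{\delta\Delta}\le \delta^2+\Delta^2$; substituting $\delta=x^2$, $\Delta=y^2$ this factors as $(x-y)^2(x^2+xy+y^2)\ge 0$ and hence always holds, with equality iff $\delta=\Delta$. Chaining the two displays gives the stated bound. I expect no genuine obstacle here: the only points demanding care are the bookkeeping in the first paragraph — checking that the three P\'olya--Szeg\H{o} sums land on $\overline{F}(G)$, $\overline{m}$, and $\overline{SO}(G)$ rather than on the Sombor index of $\overline{G}$ — and the observation that P\'olya--Szeg\H{o} actually produces the strictly sharper constant $\tfrac{\delta+\Delta}{2\sqrt{\delta\Delta}}=\tfrac12\bigl(\sqrt{\delta/\Delta}+\sqrt{\Delta/\delta}\bigr)$, from which the stated constant follows by the relaxation above. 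Sharpness for regular graphs is immediate, since $\delta=\Delta$ collapses both constants to $1$ and makes the underlying Cauchy--Schwarz step an equality.
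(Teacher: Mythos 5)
Your proof is correct, and it rests on the same key tool as the paper's: the P\'olya--Szeg\"o inequality (Lemma \ref{lemma1}) applied over the edges of $\overline{G}$, with $a_k=\sqrt{d_G(u)^2+d_G(v)^2}$ and a constant second sequence, so that the three sums become $\overline{F}(G)$, $\overline{m}$, and $\overline{SO}(G)$. But your execution differs in one substantive point, and in fact improves on the paper's. The paper takes $b_k=\delta$ and declares the admissible window to be $a=b=\delta$, $A=B=\Delta$; this requires $\sqrt{d_i^2+d_j^2}\le\Delta$, which is false in general (already for two vertices of degree $\Delta$ the radical equals $\sqrt{2}\,\Delta>\Delta$), so the paper's verification of the hypotheses of Lemma \ref{lemma1} does not hold as written, even though its resulting constant $\tfrac12\bigl(\tfrac{\delta}{\Delta}+\tfrac{\Delta}{\delta}\bigr)$ is large enough that the stated theorem is still true. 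You instead use the correct bounds $\sqrt{2}\,\delta\le a_k\le\sqrt{2}\,\Delta$ (with $b_k=1$), which is exactly what the lemma requires, and this yields the sharper intermediate constant $\tfrac{\delta+\Delta}{2\sqrt{\delta\Delta}}$; your final relaxation $(\delta+\Delta)\sqrt{\delta\Delta}\le\delta^2+\Delta^2$, justified by the factorization $(x-y)^2(x^2+xy+y^2)\ge 0$ after substituting $\delta=x^2$, $\Delta=y^2$, is valid and recovers the theorem's constant, with equality exactly when $\delta=\Delta$. So your argument both proves the statement and establishes a strictly stronger inequality along the way; the only price is the extra elementary comparison of constants, which the paper's (invalid) choice of bounds sidesteps. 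Your explicit handling of the degenerate complete-graph case is also a point of care the paper omits.
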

\begin{proof}  Let $V(G)=\{v_1,v_2,\dots,v_n\}$ and let $d_i$ denotes the degree of the vertex $v_i$ in $G$. Letting $a_k\to \sqrt{{d_i}^2+{d_j}^2}$ and $b_k=\delta$ in Lemma \ref{lemma1} and choosing $a=\delta=b$ and $A=\Delta=B$, we have $0<a\leq a_k\leq A < \infty$ and $0<b\leq b_k\leq B < \infty$ for $k=1,2,\dots,m$. Notice that $ \dfrac{(ab+AB)^2}{4abAB}= \dfrac{1}{4} \left(\dfrac{\delta}{\Delta}+\dfrac{\Delta}{\delta}\right)^2.$ Applying the Lemma \ref{lemma1} with the sums running over the edges in $\overline{G}$, we have
$$\dfrac{\displaystyle\sum_{v_iv_j\in E(\overline{G})}[{d_i}^2+{d_j}^2 ]\displaystyle\sum_{v_iv_j\in E(\overline{G})}{\delta}^2}{\left(\displaystyle\sum_{v_iv_j\in E(\overline{G})}\delta\sqrt{{d_i}^2+{d_j}^2}\right)^2}\leq \dfrac{1}{4} \left(\dfrac{\delta}{\Delta}+\dfrac{\Delta}{\delta}\right)^2 $$
 $$i.e., \dfrac{\overline{F}(G)\overline{m}}{\overline{SO}(G)^2}\leq \dfrac{1}{4} \left(\dfrac{\delta}{\Delta}+\dfrac{\Delta}{\delta}\right)^2 $$
 Thus $$\sqrt{\overline{m}\overline{F}(G)}\leq \dfrac{1}{2} \left(\dfrac{\delta}{\Delta}+\dfrac{\Delta}{\delta}\right) \overline{SO}(G).$$
\end{proof}
We now present the relation between Sombor coindex and the first Zagreb coindex.
\begin{theorem} Let $G$ be a graph on $n$ vertices and $m$ edges. Then $$2\sqrt{\overline{m} \Delta\overline{M}_1(G)}\leq  \left(1+\dfrac{\Delta}{\delta}\right) \overline{SO}(G).$$
\end{theorem}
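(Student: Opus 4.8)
The plan is to imitate the proof of the preceding theorem and apply the P\'olya--Szeg\"o inequality (Lemma \ref{lemma1}) with all sums running over $E(\overline{G})$, but with a choice of sequences tailored so that $\overline{M}_1(G)$ and $\overline{SO}(G)$ appear directly. Writing $d_i$ for $d_G(v_i)$, I would set $a_k\to\sqrt{d_i+d_j}$ and $b_k\to\sqrt{d_i^2+d_j^2}\big/\sqrt{d_i+d_j}$ for each edge $v_iv_j\in E(\overline{G})$. With this choice $\sum a_k^2=\overline{M}_1(G)$ and, crucially, $\sum a_kb_k=\sum_{v_iv_j\in E(\overline{G})}\sqrt{d_i^2+d_j^2}=\overline{SO}(G)$, while $\sum b_k^2=\sum_{v_iv_j\in E(\overline{G})}\frac{d_i^2+d_j^2}{d_i+d_j}$ is a residual sum that I will only need to estimate from below.

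Next I would pin down the constants $a,A,b,B$. From $\delta\le d_i,d_j\le\Delta$ one gets $2\delta\le d_i+d_j\le 2\Delta$, so $a=\sqrt{2\delta}$ and $A=\sqrt{2\Delta}$. For $b_k$ the key elementary estimates are $\delta(d_i+d_j)\le d_i^2+d_j^2\le\Delta(d_i+d_j)$, which follow at once from $\delta d\le d^2\le\Delta d$ for every degree $d$; hence $\delta\le b_k^2\le\Delta$, i.e.\ $b=\sqrt\delta$ and $B=\sqrt\Delta$. A short computation then gives $ab=\sqrt2\,\delta$ and $AB=\sqrt2\,\Delta$, and therefore $\frac{(ab+AB)^2}{4abAB}=\frac{(\delta+\Delta)^2}{4\delta\Delta}$.

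Feeding these into Lemma \ref{lemma1} yields $\overline{M}_1(G)\sum b_k^2\le\frac{(\delta+\Delta)^2}{4\delta\Delta}\,\overline{SO}(G)^2$, that is, the lower bound $\overline{SO}(G)^2\ge\frac{4\delta\Delta}{(\delta+\Delta)^2}\,\overline{M}_1(G)\sum b_k^2$. Here the residual sum appears with a favourable sign, so I can replace it by $\sum b_k^2\ge\delta\overline{m}$ (again from $b_k^2\ge\delta$, summed over the $\overline{m}$ edges of $\overline{G}$). This gives $\overline{SO}(G)^2\ge\frac{4\delta^2\Delta}{(\delta+\Delta)^2}\,\overline{m}\,\overline{M}_1(G)$; taking square roots and multiplying by $\frac{\delta+\Delta}{\delta}=1+\frac{\Delta}{\delta}$ produces exactly $2\sqrt{\overline{m}\,\Delta\,\overline{M}_1(G)}\le\bigl(1+\tfrac{\Delta}{\delta}\bigr)\overline{SO}(G)$.

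The one genuine obstacle is the asymmetry of the setup: unlike the forgotten-coindex bound, where one sequence was the constant $\delta$, here the second sequence $b_k$ is not constant and $\sum b_k^2$ is not one of the standard (co)indices. The proof hinges on observing that this residual sum is controlled on both sides by $\delta$ and $\Delta$ through $\delta\le\frac{d_i^2+d_j^2}{d_i+d_j}\le\Delta$, with the two-sided bound used once (to get $B=\sqrt\Delta$ in the P\'olya--Szeg\"o constant) and the lower bound used again to discard $\sum b_k^2$. Identifying these per-edge estimates is the crux; everything else is routine algebra, and one can check as a sanity test that for a regular graph every $b_k^2$ equals $\delta=\Delta$, both applications are tight, and the inequality becomes an equality.
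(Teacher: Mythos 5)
Your proposal is correct and coincides with the paper's own proof: the same P\'olya--Szeg\"o application over $E(\overline{G})$ with $a_k=\sqrt{d_i+d_j}$, $b_k=\sqrt{(d_i^2+d_j^2)/(d_i+d_j)}$, the same constants $a=\sqrt{2\delta}$, $A=\sqrt{2\Delta}$, $b=\sqrt{\delta}$, $B=\sqrt{\Delta}$, and the same final step discarding the residual sum via $\sum b_k^2\geq \delta\overline{m}$. If anything, you are slightly more careful than the paper, since you explicitly verify the two-sided bound $\delta(d_i+d_j)\leq d_i^2+d_j^2\leq\Delta(d_i+d_j)$ that justifies the choice of $b$ and $B$, whereas the paper only records the lower bound.
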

\begin{proof} Let $V(G)=\{v_1,v_2,\dots,v_n\}$ and let $d_i$ denotes the degree of the vertex $v_i$ in $G$. Letting $a_k\to \sqrt{d_i+d_j}$ and $b_k \to \sqrt{\dfrac{{d_i}^2+{d_j}^2}{d_i+d_j}}$ in Lemma \ref{lemma1} and choosing $a=\sqrt{2\delta}$, $A=\sqrt{2\Delta}$, $b=\sqrt{\delta}$ and $B=\sqrt{\Delta}$, we have $0<a\leq a_k\leq A < \infty$ and $0<b\leq b_k\leq B < \infty$ for $k=1,2,\dots,m$. Notice that $$ \dfrac{(ab+AB)^2}{4abAB}= \dfrac{1}{4\delta\Delta} \left(\Delta+\delta \right)^2.$$ Applying the Lemma \ref{lemma1} with the sums running over the edges in $\overline{G}$, we have
$$\dfrac{\displaystyle\sum_{v_iv_j\in E(\overline{G})}[d_i+d_j]\displaystyle\sum_{v_iv_j\in E(\overline{G})}{\dfrac{{d_i}^2+{d_j}^2}{d_i+d_j}}}{\left(\displaystyle\sum_{v_iv_j\in E(\overline{G})}\sqrt{{d_i}^2+{d_j}^2}\right)^2}\leq  \dfrac{1}{4\delta\Delta} \left(\Delta+\delta \right)^2$$
Notice that $\dfrac{{d_i}^2+{d_j}^2}{d_i+d_j}\geq \delta$. So, $\displaystyle\sum_{v_iv_j\in E(\overline{G})}{\dfrac{{d_i}^2+{d_j}^2}{d_i+d_j}}\geq \overline{m} \delta$. Thus
 $$ \dfrac{\overline{M}_1(G)\overline{m}\delta}{\overline{SO}(G)^2}\leq \dfrac{1}{4\delta\Delta} \left(\Delta+\delta \right)^2$$
 Hence $$2\sqrt{\overline{m} \Delta \overline{M}_1(G)}\leq \left(1+\dfrac{\Delta}{\delta}\right) \overline{SO}(G).$$
\end{proof}
Lastly, we present the relation of Sombor coindex and the second Zagreb coindex.
\begin{theorem}
Let $G$ be a graph on $n$ vertices and $m$ edges. Then $$ \overline{SO}(G)\leq \sqrt{\left(\dfrac{\delta}{\Delta}+\dfrac{\Delta}{\delta}\right)\overline{m} \overline{M}_2(G)}$$ Equality holds if $G$ is a regular graph.
\end{theorem}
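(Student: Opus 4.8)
The plan is to derive this upper bound from the Cauchy--Schwarz inequality rather than from Lemma \ref{lemma1}, since we are now seeking an \emph{upper} bound on $\overline{SO}(G)$ (the P\'olya--Szeg\H{o} inequality naturally produces reverse estimates, i.e. lower bounds). The route is the same in spirit as the forgotten-coindex bound announced in the remark preceding these theorems, combined with a short elementary estimate that supplies the factor $\frac{\delta}{\Delta}+\frac{\Delta}{\delta}$. As in the preceding proofs I write $V(G)=\{v_1,\dots,v_n\}$ and $d_i=d_G(v_i)$; the standing assumption $\delta\ge 1$ (implicit in the appearance of $\delta$ in a denominator) guarantees $d_id_j\ge 1>0$ for every $v_iv_j\in E(\overline G)$, so the quotients below are well defined.

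First I would split each summand of $\overline{SO}(G)$ as
$$\sqrt{d_i^2+d_j^2}=\frac{\sqrt{d_i^2+d_j^2}}{\sqrt{d_id_j}}\cdot\sqrt{d_id_j}$$
and apply Cauchy--Schwarz to the two factors, with the sum running over $E(\overline G)$:
$$\overline{SO}(G)=\sum_{v_iv_j\in E(\overline G)}\frac{\sqrt{d_i^2+d_j^2}}{\sqrt{d_id_j}}\cdot\sqrt{d_id_j}\le\sqrt{\sum_{v_iv_j\in E(\overline G)}\frac{d_i^2+d_j^2}{d_id_j}}\cdot\sqrt{\sum_{v_iv_j\in E(\overline G)}d_id_j}.$$
The second factor under the root is precisely $\overline{M}_2(G)$, so it remains only to control the first sum.

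The key step is the elementary inequality $\frac{d_i^2+d_j^2}{d_id_j}=\frac{d_i}{d_j}+\frac{d_j}{d_i}\le\frac{\delta}{\Delta}+\frac{\Delta}{\delta}$. This holds because, with $d_i,d_j\in[\delta,\Delta]$, the ratio $t=d_i/d_j$ lies in $[\delta/\Delta,\Delta/\delta]$, and the convex function $t\mapsto t+1/t$ attains its maximum over that interval at the endpoints, both of which evaluate to $\frac{\delta}{\Delta}+\frac{\Delta}{\delta}$. Summing over the $\overline m$ edges of $\overline G$ gives $\sum_{v_iv_j\in E(\overline G)}\frac{d_i^2+d_j^2}{d_id_j}\le\overline m\left(\frac{\delta}{\Delta}+\frac{\Delta}{\delta}\right)$, and substituting this into the Cauchy--Schwarz estimate yields
$$\overline{SO}(G)\le\sqrt{\left(\frac{\delta}{\Delta}+\frac{\Delta}{\delta}\right)\overline m\,\overline{M}_2(G)},$$
as claimed.

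For the equality case I would verify that a regular graph makes both estimates tight at once: if $G$ is $r$-regular then $d_i=d_j=r=\delta=\Delta$, so the elementary bound degenerates to the equality $2=\frac{\delta}{\Delta}+\frac{\Delta}{\delta}$, while in the Cauchy--Schwarz step the factor $\frac{\sqrt{d_i^2+d_j^2}}{\sqrt{d_id_j}}=\sqrt{2}$ is constant across all edges, so those two vectors are proportional and equality holds there too. I do not anticipate a real obstacle; the only points needing care are confirming that $t+1/t$ is maximized at the endpoints of $[\delta/\Delta,\Delta/\delta]$ (equivalently, that $\frac{d_i}{d_j}+\frac{d_j}{d_i}$ is largest when the two degrees are as far apart as possible) and ensuring that the degenerate case $\delta=0$ is excluded.
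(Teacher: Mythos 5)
Your proposal is correct and follows essentially the same route as the paper: the identical Cauchy--Schwarz factorization $\sqrt{d_i^2+d_j^2}=\sqrt{d_id_j}\cdot\sqrt{(d_i^2+d_j^2)/(d_id_j)}$ over $E(\overline G)$, followed by the same elementary bound $\frac{d_i}{d_j}+\frac{d_j}{d_i}\le\frac{\Delta}{\delta}+\frac{\delta}{\Delta}$ (which the paper justifies via the identity $(x+1/x)^2=(x-1/x)^2+4$ rather than your convexity-at-endpoints argument, a negligible difference). Your version is if anything slightly cleaner, since it sidesteps the typo in the paper's display where the denominator $d_id_j$ is misprinted as $d_i+d_j$.
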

\begin{proof}
We first recall the Cauchy-Schwarz inequality. Let $a_1,a_2,\dots,a_m$ and $b_1,b_2,\dots,b_m$ be two sequences of real numbers. Then $${\left(\displaystyle\sum_{k=1}^m a_kb_k\right)^2} \leq \displaystyle\sum_{k=1}^m a^2_k \displaystyle\sum_{k=1}^m b^2_k.$$
Let $V(G)=\{v_1,v_2,\dots,v_n\}$ and let $d_i$ denotes the degree of the vertex $v_i$ in $G$. Letting $a_k\to \sqrt{d_id_j}$ and $b_k \to \sqrt{\dfrac{{d_i}^2+{d_j}^2}{d_id_j}}$ in the Cauchy-Schwarz inequality with the sums running over the edges in $\overline{G}$, we have
\begin{align}\label{eqn:eq4}
{\left(\displaystyle\sum_{v_iv_j\in E(\overline{G})}\sqrt{{d_i}^2+{d_j}^2}\right)^2}\leq \displaystyle\sum_{v_iv_j\in E(\overline{G})}d_id_j \displaystyle\sum_{v_iv_j\in E(\overline{G})}\dfrac{{d_i}^2+{d_j}^2}{d_i+d_j}
\end{align} 
Since $0<\delta \leq d_i\leq \Delta$ for any $v_i$, we have $\dfrac{\delta}{\Delta}\leq \dfrac{d_i}{d_j}\leq \dfrac{\Delta}{\delta}$. Now for any edge $v_iv_j$ of $G$ $(d_i\geq d_j)$, we have
\begin{align*}
\left(\dfrac{d_i}{d_j}+\dfrac{d_j}{d_i}\right)^2 = &\left(\dfrac{d_i}{d_j}-\dfrac{d_j}{d_i}\right)^2 +4 \\
\leq & \left(\dfrac{\Delta}{\delta}-\dfrac{\delta}{\Delta}\right)^2 +4 = \left(\dfrac{\Delta}{\delta}+\dfrac{\delta}{\Delta}\right)^2
\end{align*}
Thus (\ref{eqn:eq4}) becomes
$$ \overline{SO}(G)\leq \sqrt{\left(\dfrac{\delta}{\Delta}+\dfrac{\Delta}{\delta}\right)\overline{m} \overline{M}_2(G)}$$ Moreover, it is easy to see that the equality holds if $G$ is a regular graph.
\end{proof}
\section{Bounds on the Sombor coindex of graph operations}
Since several complicated and important graphs often arise from simpler graphs via some graph operations, we also present the Sombor coindex of some graph operations in this section. We give the bounds on the Sombor coindex of some graph operations namely, union, sum, compostion and Cartesian product. As an application, the Sombor coindex of some well-known (chemical) graphs are computed.
\subsection{Union}
We now consider the simplest graph operation of two graphs. A union $G_1 \cup G_2$ of two graphs $G_1$ and $G_2$ with disjoint vertex sets $V(G_1)$ and $V(G_2)$ is the graph with the vertex set $V(G_1)\cup V(G_2)$ and the edge set $E(G_1)\cup E(G_2).$ 
\begin{theorem} Let $G_1$ and $G_2$ be two graphs on $n_1$ and $n_2$ vertices, respectively. Then we have the following.
\begin{enumerate}[label=(\roman*)]
\item $ \overline{SO}(G_1\cup G_2)\leq \overline{SO}(G_1) + \overline{SO}(G_2)+n_1n_2\sqrt{{\Delta_1}^2+{\Delta_2}^2}.$
\item $\overline{SO}(G_1\cup G_2)\geq \overline{SO}(G_1) + \overline{SO}(G_2)+n_1n_2\sqrt{{\delta_1}^2+{\delta_2}^2}$.
\end{enumerate}
Here, $\Delta_i$ and $\delta_i$ denote the maximum degree vertex and the minimum degree vertex of $G_i$, respectively for $i=1,2$. Moreover, the equality holds if $G_1$ and $G_2$ are regular. 
\end{theorem}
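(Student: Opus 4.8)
The plan is to exploit the fact that the union operation leaves every vertex degree unchanged. Since $V(G_1)$ and $V(G_2)$ are disjoint and no edge joins the two parts, each vertex $u\in V(G_i)$ satisfies $d_{G_1\cup G_2}(u)=d_{G_i}(u)$ for $i=1,2$. This degree-preservation is the single observation that drives the whole argument.

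First I would classify the non-adjacent pairs of $G_1\cup G_2$ into three mutually exclusive groups: pairs lying entirely in $G_1$ with $uv\notin E(G_1)$, pairs lying entirely in $G_2$ with $uv\notin E(G_2)$, and the $n_1n_2$ \emph{cross} pairs $uv$ with $u\in V(G_1)$ and $v\in V(G_2)$, which are automatically non-adjacent in the union. Because these three families exhaust and partition the non-edges, the defining sum of $\overline{SO}(G_1\cup G_2)$ splits as
$$\overline{SO}(G_1\cup G_2)=\overline{SO}(G_1)+\overline{SO}(G_2)+\sum_{\substack{u\in V(G_1)\\ v\in V(G_2)}}\sqrt{{d_{G_1}(u)}^2+{d_{G_2}(v)}^2},$$
where the first two terms are recognized as $\overline{SO}(G_1)$ and $\overline{SO}(G_2)$ precisely because degrees transfer unchanged to the union.

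Next I would bound the cross sum. For each cross pair we have $\delta_1\leq d_{G_1}(u)\leq\Delta_1$ and $\delta_2\leq d_{G_2}(v)\leq\Delta_2$, so the summand $\sqrt{{d_{G_1}(u)}^2+{d_{G_2}(v)}^2}$ lies between $\sqrt{{\delta_1}^2+{\delta_2}^2}$ and $\sqrt{{\Delta_1}^2+{\Delta_2}^2}$. Summing over all $n_1n_2$ cross pairs delivers both the upper and the lower bound at once, which is exactly parts (i) and (ii).

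Finally, for the equality case I would note that if both $G_1$ and $G_2$ are regular, then $\delta_i=\Delta_i$ for $i=1,2$, so the cross-term summand is constant and both inequalities collapse to equalities. There is no genuine obstacle in this proof; the only care required is to verify that the three-way partition of the non-edges is complete and disjoint, and that the degrees are genuinely inherited from the factors, since the entire estimate rests on these two facts.
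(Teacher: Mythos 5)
Your proposal is correct and follows essentially the same route as the paper: the same three-way split of the non-edges of $G_1\cup G_2$ into non-edges within $G_1$, non-edges within $G_2$, and the $n_1n_2$ cross pairs, followed by bounding the cross sum termwise between $\sqrt{{\delta_1}^2+{\delta_2}^2}$ and $\sqrt{{\Delta_1}^2+{\Delta_2}^2}$. Your treatment is in fact slightly more explicit than the paper's, since you spell out the degree-preservation observation and the equality case rather than leaving them as remarks.
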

\begin{proof} Let $G=G_1\cup G_2$. By the definition of Sombor coindex, we have
\begin{align*}
\overline{SO}(G)=&\sum_{uv\in E(\overline{G})} \sqrt{{d_G(u)}^2+{d_G(v)}^2} \\
=&\sum_{uv\in E(\overline{G}_1)} \sqrt{{d_{G_1}(u)}^2+{d_{G_1}(v)}^2}+\sum_{uv\in E(\overline{ G}_2)} \sqrt{{d_{G_2}(u)}^2+{d_{G_2}(v)}^2}\\
+&\sum_{u\in V(G_1)}\left[\sum_{v\in V(G_2)} \sqrt{{d_{G_1}(u)}^2+{d_{G_2}(v)}^2}\right] \\
\end{align*}
Notice that the last sum is the contribution to the Sombor coindex of the union from the missing edges between the components, which are the edges of the complete bipartite graph $K_{n_1,n_2}$. Thus,
$$\overline{SO}(G_1\cup G_2)\leq \overline{SO}(G_1) + \overline{SO}(G_2)+n_1n_2\sqrt{{\Delta_1}^2+{\Delta_2}^2}.$$
Moreover, it is easy to notice that the equality holds if $G_1$ and $G_2$ are regular. Similarly, the lower bound follows.
\end{proof}
\subsection{Sum}
Next we consider the sum, also known as join, of two graphs. A sum $G_1 + G_2$ of two graphs $G_1$ and $G_2$ with disjoint vertex sets $V(G_1)$ and $V(G_2)$ is the graph with the vertex set $V(G_1)\cup V(G_2)$  and the edge set $E(G_1)\cup E(G_2)\cup \{u_1u_2:u_1\in V(G_1), u_2\in  V(G_2)\}$. Hence, we keep all edges of both the graphs and also join each vertex of one graph to each vertex of the other graph. 
\begin{theorem} Let $G_1$ and $G_2$ be two graphs on $n_1$ and $n_2$ vertices and $m_1$ and $m_2$ edges, respectively. Then we have the following.
\begin{enumerate}[label=(\roman*)]
\item $ \overline{SO}(G_1 + G_2)\leq \sqrt{2}[\overline{m}_1(\Delta_1+n_2) + \overline{m}_2(\Delta_2+n_1)].$
\item $\overline{SO}(G_1+ G_2)\geq \sqrt{2}[\overline{m}_1(\delta_1+n_2) + \overline{m}_2(\delta_2+n_1)].$
\end{enumerate}
Here, $\Delta_i$ and $\delta_i$ denote the maximum degree vertex and the minimum degree vertex of $G_i$, respectively for $i=1,2$. Moreover, the equality holds if $G_1$ and $G_2$ are regular. 
\end{theorem}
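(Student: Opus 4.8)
The plan is to exploit the structure of the join directly: compute the vertex degrees in $G=G_1+G_2$, identify the edge set of the complement $\overline{G}$, split the defining sum of $\overline{SO}(G)$ accordingly, and then bound each resulting summand termwise using the degree extremes $\Delta_i$ and $\delta_i$.

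First I would record the degrees. In $G=G_1+G_2$ every vertex $u\in V(G_1)$ gains $n_2$ new neighbors (all of $V(G_2)$), so $d_G(u)=d_{G_1}(u)+n_2$; symmetrically $d_G(v)=d_{G_2}(v)+n_1$ for every $v\in V(G_2)$. The key structural observation, and the only point requiring genuine care, concerns $E(\overline{G})$: because the join makes every cross pair $(u_1,u_2)$ with $u_1\in V(G_1)$, $u_2\in V(G_2)$ adjacent in $G$, no such cross pair survives in the complement. Hence $E(\overline{G})=E(\overline{G}_1)\sqcup E(\overline{G}_2)$, a disjoint union containing no cross edges. This is exactly what distinguishes the join from the union (where the missing cross edges contributed an $n_1n_2$ term); here that term is absent, which is what produces the clean two-part bound in the statement.

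With this decomposition the defining sum splits as
\[
\overline{SO}(G)=\sum_{uu'\in E(\overline{G}_1)}\!\!\sqrt{(d_{G_1}(u)+n_2)^2+(d_{G_1}(u')+n_2)^2}+\sum_{vv'\in E(\overline{G}_2)}\!\!\sqrt{(d_{G_2}(v)+n_1)^2+(d_{G_2}(v')+n_1)^2}.
\]
On the first family the shifted degrees lie between $\delta_1+n_2$ and $\Delta_1+n_2$, and on the second between $\delta_2+n_1$ and $\Delta_2+n_1$. Using the elementary bound $\sqrt{x^2+y^2}\le\sqrt{2}\,\max\{x,y\}$ I would bound each summand of the first sum by $\sqrt{2}(\Delta_1+n_2)$ and each of the second by $\sqrt{2}(\Delta_2+n_1)$; since the sums contain $\overline{m}_1$ and $\overline{m}_2$ terms respectively, adding yields part (i).

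Part (ii) is entirely parallel, replacing $\sqrt{x^2+y^2}\le\sqrt{2}\,\max\{x,y\}$ by $\sqrt{x^2+y^2}\ge\sqrt{2}\,\min\{x,y\}$ and the maxima by the minima $\delta_1+n_2$, $\delta_2+n_1$. Finally, for the equality discussion I would note that both termwise bounds are saturated precisely when all the shifted degrees on each family coincide, i.e.\ when $\Delta_1=\delta_1$ and $\Delta_2=\delta_2$; thus equality holds if $G_1$ and $G_2$ are both regular, as claimed. No step beyond the complement-structure observation presents any real difficulty, so the bulk of the write-up is the termwise estimation.
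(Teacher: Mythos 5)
Your proposal is correct and follows essentially the same route as the paper's own proof: compute the shifted degrees $d_{G_1}(u)+n_2$ and $d_{G_2}(v)+n_1$ in the join, observe that the complement contains no cross edges so the defining sum splits over $E(\overline{G}_1)$ and $E(\overline{G}_2)$, and bound each summand by $\sqrt{2}$ times the extreme shifted degree. The only difference is presentational: you make the termwise estimate $\sqrt{x^2+y^2}\le \sqrt{2}\max\{x,y\}$ and the equality analysis explicit, which the paper leaves implicit.
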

\begin{proof} Let $G=G_1+G_2$. Notice that $d_G (u) = d_{G_1} (u)+n_2$ and $d_G(v) = d_{G_2} (v) + n_1$ for $u\in  V(G_1), v \in V(G_2)$. Since all possible edges between $G_1$ and $G_2$ are present in $G$, there are no missing edges, and hence their contribution is zero. Thus,
\begin{align*}
\overline{SO}(G)=&\sum_{uv\in E(\overline{G}_1)} \sqrt{{d_{G} (u)}^2+{d_{G} (v)}^2}+\sum_{uv\in E(\overline{ G}_2)} \sqrt{{d_{G} (u)}^2+{d_{G} (v)}^2}\\
=&\sum_{uv\in E(\overline{G}_1)} \sqrt{(d_{G_1} (u)+n_2)^2+(d_{G_1} (v)+n_2)^2}\\+& \sum_{uv\in E(\overline{ G}_2)} \sqrt{(d_{G_2} (u) + n_1)^2+(d_{G_2} (v) + n_1)^2}\\
\leq&\sqrt{2}[\overline{m}_1(\Delta_1+n_2) + \overline{m}_2(\Delta_2+n_1)]
\end{align*}
Moreover, the equality holds if $G_1$ and $G_2$ are regular. Similarly, the lower bound follows.
\end{proof}
\begin{corollary} 
\label{corollary1}
The Sombor coindex of the complete bipartite graph $K_{p,q}$ is given by
 $$ \overline{SO}(K_{p,q}) =\overline{SO}(\overline{K_p}+\overline{K_q})= \dfrac{pq(p+q-2)}{\sqrt{2}}.$$
\end{corollary}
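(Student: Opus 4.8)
The plan is to recognize the complete bipartite graph as a join and then invoke the preceding theorem on the Sombor coindex of a sum. First I would record the standard identity $K_{p,q} = \overline{K_p} + \overline{K_q}$: each part carries no internal edges, so the two parts are the empty graphs $\overline{K_p}$ and $\overline{K_q}$, and the join operation supplies exactly the edges joining every vertex of one part to every vertex of the other, which is precisely the edge set of $K_{p,q}$.

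The key observation is that each empty graph is regular---in fact $0$-regular---so the equality case of the sum theorem applies, collapsing both the upper and lower bounds into the exact value of $\overline{SO}(K_{p,q})$. Setting $G_1 = \overline{K_p}$ and $G_2 = \overline{K_q}$, I would list the relevant parameters: $n_1 = p$, $n_2 = q$, and $\Delta_1 = \delta_1 = \Delta_2 = \delta_2 = 0$ since every degree is zero, together with $\overline{m}_1 = {p\choose2}$ and $\overline{m}_2 = {q\choose2}$, because neither empty graph contributes any actual edges and so its complement is complete.

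Substituting into the exact expression $\overline{SO}(G_1+G_2) = \sqrt{2}\bigl[\overline{m}_1(\Delta_1 + n_2) + \overline{m}_2(\Delta_2 + n_1)\bigr]$ annihilates the $\Delta_i$ terms and leaves $\sqrt{2}\bigl[\tfrac{p(p-1)}{2}\,q + \tfrac{q(q-1)}{2}\,p\bigr]$. Factoring out $\tfrac{pq}{2}$ and using $\sqrt{2}/2 = 1/\sqrt{2}$ then yields $\tfrac{pq(p+q-2)}{\sqrt{2}}$, as claimed. There is essentially no obstacle here: the result is a direct specialization of the sum theorem, and the only point deserving care is the book-keeping of the complement edge counts $\overline{m}_1$ and $\overline{m}_2$, which are governed by the internal (non-)adjacencies of the two parts rather than by the cross edges that the join adds. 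I would also note that the displayed intermediate identity $\overline{SO}(K_{p,q}) = \overline{SO}(\overline{K_p}+\overline{K_q})$ is simply a restatement of the decomposition and requires no separate argument.
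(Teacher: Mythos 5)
Your proposal is correct and follows exactly the route the paper intends: the corollary's own statement writes $K_{p,q}$ as the join $\overline{K_p}+\overline{K_q}$, and since empty graphs are $0$-regular, the equality case of the sum theorem gives $\overline{SO}(K_{p,q})=\sqrt{2}\bigl[\binom{p}{2}q+\binom{q}{2}p\bigr]=\frac{pq(p+q-2)}{\sqrt{2}}$, matching your computation. Your bookkeeping of $\overline{m}_1=\binom{p}{2}$ and $\overline{m}_2=\binom{q}{2}$ (the complements of the empty parts being complete) is precisely the point on which the specialization turns, and you handled it correctly.
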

\begin{remark}
We thus obtain explicit formulae for the Sombor coindex of the $n$-vertex star graph $S_n = K_{1,n-1} $ for
$ n \geq  2$ via Corollary \ref{corollary1}, i.e., $$ \overline{SO}(S_n)= \dfrac{(n-1)(n-2)}{\sqrt{2}}.$$
\end{remark}
\subsection{Cartesian product} 
The Cartesian product $G_1\square G_2$ of graphs $G_1$ and $G_2$ is the graph with the vertex set $V(G_1) \times V(G_2)$ in which $u = (u_1, u_2)$ is adjacent with $v = (v_1, v_2)$ whenever ($u_1 = v_1$ and $u_2 v_2\in E(G_2)$) or ($u_2 = v_2$ and $u_1 v_1\in  E(G_1)$). Notice that the number of edges in $G_1\square G_2$ is $n_1m_2 + m_1n_2$ and the degree of a vertex $(u_1, u_2)$ of $G_1\square G_2$ is $d_{G_1} (u_1) + d_{G_2} (u_2)$, where $n_i=|V(G_i)|$ and $m_i=|E(G_i)|$ for $i=1,2$.
\begin{theorem}
\label{theorem2}
 Let $G_1$ and $G_2$ be two graphs on $n_1$ and $n_2$ vertices and $m_1$ and $m_2$ edges, respectively. Then 
$$\overline{m} \sqrt{2}(\delta_1+\delta_2)\leq \overline{SO}(G_1 \square G_2)\leq \overline{m} \sqrt{2}(\Delta_1+\Delta_2).$$
Here, $\Delta_i$ and $\delta_i$ denote the maximum degree vertex and the minimum degree vertex of $G_i$, respectively for $i=1,2$; and $\overline{m}$ is the number of edges in $\overline{G_1 \square G_2}$. Moreover, the equality holds if $G_1$ and $G_2$ are regular. 
\end{theorem}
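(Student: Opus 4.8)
The plan is to reduce both bounds to a single per-edge estimate summed over the $\overline{m}$ missing edges, using only the degree sequence of $G = G_1 \square G_2$. First I would invoke the degree formula recorded just before the statement: every vertex $w = (w_1, w_2)$ of $G$ satisfies $d_G(w) = d_{G_1}(w_1) + d_{G_2}(w_2)$. Since $\delta_i \leq d_{G_i}(w_i) \leq \Delta_i$ for $i = 1, 2$, this gives
$$\delta_1 + \delta_2 \leq d_G(w) \leq \Delta_1 + \Delta_2 \qquad \text{for every } w \in V(G),$$
so that $\delta_1 + \delta_2$ and $\Delta_1 + \Delta_2$ are precisely the minimum and maximum vertex degrees of $G$.

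Next, for any pair of non-adjacent vertices $u, v$ of $G$ I would sandwich the summand $\sqrt{d_G(u)^2 + d_G(v)^2}$. Because each of $d_G(u)$ and $d_G(v)$ lies in the interval $[\delta_1 + \delta_2,\, \Delta_1 + \Delta_2]$, and $\sqrt{x^2 + y^2}$ is increasing in each of $x, y > 0$, we obtain
$$\sqrt{2}\,(\delta_1 + \delta_2) \leq \sqrt{d_G(u)^2 + d_G(v)^2} \leq \sqrt{2}\,(\Delta_1 + \Delta_2).$$
Summing this inequality over all $\overline{m}$ edges $uv \in E(\overline{G})$ and applying the definition of the Sombor coindex then yields
$$\overline{m}\,\sqrt{2}\,(\delta_1 + \delta_2) \leq \overline{SO}(G_1 \square G_2) \leq \overline{m}\,\sqrt{2}\,(\Delta_1 + \Delta_2),$$
which is exactly the asserted double inequality.

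For the equality clause I would note that if $G_1$ and $G_2$ are both regular then $\delta_i = \Delta_i$, so every vertex of $G$ shares the common degree $\delta_1 + \delta_2 = \Delta_1 + \Delta_2$; consequently each summand equals $\sqrt{2}\,(\delta_1 + \delta_2)$ and both bounds are attained at once. I expect no real obstacle here, since the argument is a single per-edge estimate; the only point meriting care is to record that the relevant degree extremes for the product are the \emph{sums} $\delta_1 + \delta_2$ and $\Delta_1 + \Delta_2$. Unlike the union and sum cases, no separate treatment of the missing edges "between the components" is required, because $\overline{m}$ is carried along unexpanded rather than split into intra- and inter-factor contributions.
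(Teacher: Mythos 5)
Your proposal is correct and follows essentially the same route as the paper: both use the degree formula $d_G((w_1,w_2)) = d_{G_1}(w_1) + d_{G_2}(w_2)$ to bound each summand $\sqrt{d_G(u)^2 + d_G(v)^2}$ by $\sqrt{2}(\Delta_1+\Delta_2)$ from above (and $\sqrt{2}(\delta_1+\delta_2)$ from below), then sum over the $\overline{m}$ edges of the complement. Your write-up is simply a slightly more explicit version of the paper's per-edge estimate, including the same treatment of the equality case for regular factors.
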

\begin{proof} Let $G=G_1\square G_2$. Let $n=|V(G)|$ and $m=|E(G)|$. Notice that $n=n_1n_2$ and $m=n_1m_2 + m_1n_2$. So, the number of edges in $\overline{G}$, $\overline{m}={n_1n_2\choose2}-n_1m_2 - m_1n_2$.
By the definition of the Sombor coindex, we have
\begin{align*}
\overline{SO}(G)=&\sum_{uv\in E(\overline{G})} \sqrt{{d_{G} (u)}^2+{d_{G} (v)}^2}\\
=&\sum_{uv\in E(\overline{G})} \sqrt{(d_{G_1} (u_1) + d_{G_2} (u_2))^2+(d_{G_1} (v_1) + d_{G_2} (v_2))^2}\\
\leq&\sum_{uv\in E(\overline{G})} \sqrt{2}(\Delta_1+\Delta_2)=\overline{m} \sqrt{2}(\Delta_1+\Delta_2)
\end{align*}
Moreover, the equality holds if $G_1$ and $G_2$ are regular. Similarly, the lower bound follows.
\end{proof}
The following corollary is immediate for the graph $G=C_p \square C_q$. This graph is called $C_4$ nanotorus.
\begin{corollary} The Sombor coindex of the $C_4$ nanotorus is given by
 $$ \overline{SO}(C_p \square C_q) = 2pq(pq-5)\sqrt{2}.$$
\end{corollary}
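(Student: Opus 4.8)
The plan is to apply Theorem~\ref{theorem2} directly, exploiting the fact that both factors are regular so that the upper and lower bounds there coincide. First I would note that $C_p$ and $C_q$ are $2$-regular, whence $\Delta_1=\delta_1=2$ and $\Delta_2=\delta_2=2$. The equality clause of Theorem~\ref{theorem2} (which holds precisely when $G_1$ and $G_2$ are regular) then applies, collapsing the two bounds to the single identity $\overline{SO}(C_p\square C_q)=\overline{m}\,\sqrt{2}\,(\Delta_1+\Delta_2)=4\sqrt{2}\,\overline{m}$.

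The remaining work is to evaluate $\overline{m}$, the number of edges of $\overline{C_p\square C_q}$. Using the parameters recorded just before Theorem~\ref{theorem2}, the product $G=C_p\square C_q$ has $n=n_1n_2=pq$ vertices and $m=n_1m_2+m_1n_2=pq+pq=2pq$ edges, the last equality holding because for a cycle the number of edges equals the number of vertices, i.e.\ $n_1=m_1=p$ and $n_2=m_2=q$. Hence $\overline{m}=\binom{pq}{2}-m=\tfrac{1}{2}pq(pq-1)-2pq=\tfrac{1}{2}pq(pq-5)$. Substituting into the identity above would give $\overline{SO}(C_p\square C_q)=4\sqrt{2}\cdot\tfrac{1}{2}pq(pq-5)=2pq(pq-5)\sqrt{2}$, which is the claim.

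I do not expect any genuine obstacle here: the argument is a one-line specialization of Theorem~\ref{theorem2} together with a binomial-coefficient count, and the only points needing care are confirming that the regularity hypothesis is met (so that equality, not merely an inequality, may be invoked) and computing $\overline{m}$ correctly from $\overline{m}=\binom{n}{2}-m$. As an independent sanity check, one may instead observe that $C_p\square C_q$ is itself $4$-regular on $pq$ vertices and invoke the earlier corollary $\overline{SO}(G)=nr(n-1-r)/\sqrt{2}$ with $n=pq$ and $r=4$; this returns the same value $2pq(pq-5)\sqrt{2}$, which increases my confidence in the computation.
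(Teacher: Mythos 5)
Your proposal is correct and matches the paper's intended argument exactly: the paper declares this corollary ``immediate'' from Theorem~\ref{theorem2}, meaning precisely your specialization --- both cycles are $2$-regular, so the equality case gives $\overline{SO}(C_p\square C_q)=4\sqrt{2}\,\overline{m}$ with $\overline{m}=\binom{pq}{2}-2pq=\tfrac{1}{2}pq(pq-5)$. Your additional sanity check via the $r$-regular corollary is a nice independent confirmation but not a different proof route.
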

\subsection{Composition}
The composition $G_1[G_2]$ of graphs $G_1$ and $G_2$ with disjoint vertex sets and edge sets is the graph with the vertex set $V(G_1) \times V(G_2)$ in which $u = (u_1, u_2)$ is adjacent with $v = (v_1, v_2)$ whenever ($u_1$ is adjacent with $v_1$)  or ($u_1 = v_1$ and $u_2$ is adjacent with $v_2$). Notice that the number of edges in $G_1[G_2]$ is $n_1m_2 + m_1{n}^2_2$ and the degree of a vertex $(u_1, u_2)$ of $G_1[G_2]$ is $n_2  d_{G_1} (u_1) + d_{G_2} (u_2)$, where $n_i=|V_i|$ and $m_i=|E_i|$ for $i=1,2$.
\begin{theorem} Let $G_1$ and $G_2$ be two graphs on $n_1$ and $n_2$ vertices, respectively. Then we have the following.
$$\overline{m} \sqrt{2}(n_2\delta_1+\delta_2)\leq \overline{SO}(G_1[G_2])\leq \overline{m} \sqrt{2}(n_2\Delta_1+\Delta_2).$$
Here, $\Delta_i$ and $\delta_i$ denote the maximum degree vertex and the minimum degree vertex of $G_i$, respectively for $i=1,2$; and $\overline{m}$ is the number of edges in $\overline{G_1 [G_2]}$. Moreover, the equality holds if $G_1$ and $G_2$ are regular. 
\end{theorem}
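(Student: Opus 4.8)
The plan is to reuse essentially the argument of the Cartesian-product bound (Theorem~\ref{theorem2}), because the composition likewise furnishes a \emph{uniform} two-sided bound on every vertex degree, which is all that drives the estimate. First I would fix $G = G_1[G_2]$, with $n = n_1 n_2$ vertices and $m = n_1 m_2 + m_1 n_2^2$ edges, so that $\overline{m} = {n_1 n_2 \choose 2} - n_1 m_2 - m_1 n_2^2$ counts the edges of $\overline{G}$. The only structural fact I need is the degree formula stated just before the theorem, namely $d_G((u_1,u_2)) = n_2 d_{G_1}(u_1) + d_{G_2}(u_2)$. Combining this with $\delta_1 \le d_{G_1}(u_1) \le \Delta_1$ and $\delta_2 \le d_{G_2}(u_2) \le \Delta_2$ gives at once the uniform bound $n_2 \delta_1 + \delta_2 \le d_G(w) \le n_2 \Delta_1 + \Delta_2$ for every vertex $w \in V(G)$.

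Next I would expand the Sombor coindex as a sum over $E(\overline{G})$ and substitute the degree formula:
\[
\overline{SO}(G) = \sum_{uv \in E(\overline{G})} \sqrt{\bigl(n_2 d_{G_1}(u_1)+d_{G_2}(u_2)\bigr)^2 + \bigl(n_2 d_{G_1}(v_1)+d_{G_2}(v_2)\bigr)^2}.
\]
For the upper bound, each of the two degrees appearing under a given radical is at most $n_2\Delta_1+\Delta_2$, so each summand is at most $\sqrt{2}\,(n_2\Delta_1+\Delta_2)$; summing over the $\overline{m}$ edges of $\overline{G}$ yields $\overline{SO}(G) \le \overline{m}\sqrt{2}(n_2\Delta_1+\Delta_2)$. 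Replacing every maximum by the corresponding minimum gives the matching lower bound $\overline{SO}(G) \ge \overline{m}\sqrt{2}(n_2\delta_1+\delta_2)$ by the identical computation.

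For the equality clause I would note that if $G_1$ is $r_1$-regular and $G_2$ is $r_2$-regular, then $d_G(w) = n_2 r_1 + r_2$ for every $w$, so each radical equals exactly $\sqrt{2}(n_2 r_1 + r_2)$ and both bounds are attained. I do not expect a genuine obstacle: in contrast to the union and sum cases, the product structure here does not require isolating a separate family of ``cross'' non-edges, since the degree bound is uniform over all vertices and factors straight out of the sum. The only thing worth verifying carefully is simply that the quoted degree formula $n_2 d_{G_1}(u_1)+d_{G_2}(u_2)$ and the edge count are correct, after which the inequality is purely mechanical.
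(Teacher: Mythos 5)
Your proposal is correct and follows essentially the same route as the paper's own proof: expand $\overline{SO}(G_1[G_2])$ over $E(\overline{G_1[G_2]})$, substitute the degree formula $d_G((u_1,u_2)) = n_2 d_{G_1}(u_1) + d_{G_2}(u_2)$, bound each radical uniformly by $\sqrt{2}(n_2\Delta_1+\Delta_2)$ (respectively $\sqrt{2}(n_2\delta_1+\delta_2)$ from below), and observe that regularity forces all degrees to coincide so both bounds are attained. Your remark that no separate treatment of ``cross'' non-edges is needed, unlike in the union case, is exactly why the paper's proof is equally short.
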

\begin{proof}
Let $G=G_1[G_2]$. Let $n=|V(G)|$ and $m=|E(G)|$. Notice that $n=n_1n_2$ and $m=n_1m_2 + m_1n^2_2$. So, the number of edges in $\overline{G}$, $\overline{m}={n_1n_2\choose2}-n_1m_2 - m_1n^2_2$.
By the definition of the Sombor coindex, we have
\begin{align*}
\overline{SO}(G)=&\sum_{uv\in E(\overline{G})} \sqrt{{d_{G} (u)}^2+{d_{G} (v)}^2}\\
=&\sum_{uv\in E(\overline{G})} \sqrt{(n_2d_{G_1} (u_1) + d_{G_2} (u_2))^2+(n_2d_{G_1} (v_1) + d_{G_2} (v_2))^2}\\
\leq&\sum_{uv\in E(\overline{G})} \sqrt{2}(n_2\Delta_1+\Delta_2)=\overline{m} \sqrt{2}(n_2\Delta_1+\Delta_2)
\end{align*}
Moreover, the equality holds if $G_1$ and $G_2$ are regular. Similarly, the lower bound follows. This completes the proof.
\end{proof}
As a corollary, the Sombor coindex of the closed fences $C_n[K_2]$ is immediate.
\begin{corollary} The Sombor coindex of the closed fences $C_n[K_2]$ is given by
 $$ \overline{SO}(C_n[K_2]) = 5[2n(n-3)+4]\sqrt{2}.$$
\end{corollary}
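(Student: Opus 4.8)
The plan is to recognize the closed fence $C_n[K_2]$ as an instance of the composition $G_1[G_2]$ treated in the preceding theorem, with $G_1 = C_n$ and $G_2 = K_2$, and then to exploit the regularity of both factors so that the stated upper and lower bounds collapse to a single exact value.

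First I would record the parameters of the two factors. The cycle $C_n$ is $2$-regular on $n_1 = n$ vertices with $m_1 = n$ edges, so $\Delta_1 = \delta_1 = 2$; the graph $K_2$ is $1$-regular on $n_2 = 2$ vertices with $m_2 = 1$ edge, so $\Delta_2 = \delta_2 = 1$. Since $G_1$ and $G_2$ are both regular, the equality condition in the composition theorem is satisfied, so the two bounds coincide and yield
$$\overline{SO}(C_n[K_2]) = \overline{m}\,\sqrt{2}\,(n_2\Delta_1 + \Delta_2) = 5\sqrt{2}\,\overline{m},$$
using $n_2\Delta_1 + \Delta_2 = 2\cdot 2 + 1 = 5$. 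As a cross-check one may note directly that $C_n[K_2]$ is $5$-regular on $2n$ vertices and apply the corollary for $r$-regular graphs, which must give the same number.

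The only remaining task is to evaluate $\overline{m}$, the number of non-edges. Here I would use the composition edge count $m = n_1 m_2 + m_1 n_2^2$ from the theorem's preamble, substituting $m = n\cdot 1 + n\cdot 4 = 5n$ and $n_1 n_2 = 2n$ into $\overline{m} = {2n \choose 2} - m$, then simplify the resulting polynomial in $n$ and multiply through by $5\sqrt{2}$ to read off the closed form.

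I expect the one place demanding genuine care — and hence the main obstacle to pinning down the constant exactly — is the bookkeeping for $\overline{m}$: one must invoke the \emph{composition} edge count $n_1 m_2 + m_1 n_2^2$ rather than the superficially similar Cartesian-product count $n_1 m_2 + m_1 n_2$, and evaluate ${2n \choose 2} = 2n^2 - n$ precisely, since any slip in these low-order terms propagates directly into the final additive constant.
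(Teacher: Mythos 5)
Your setup is exactly the paper's (implicit) route: specialize the composition theorem to $G_1=C_n$, $G_2=K_2$, use the regularity of both factors to force equality in the two bounds, and reduce everything to the non-edge count $\overline{m}$, giving $\overline{SO}(C_n[K_2]) = 5\sqrt{2}\,\overline{m}$. The gap is in the one step you deferred: carrying out the arithmetic you describe does \emph{not} read off the stated closed form. With $m = n_1m_2 + m_1n_2^2 = n + 4n = 5n$ and ${2n \choose 2} = 2n^2 - n$, you get $\overline{m} = 2n^2 - 6n = 2n(n-3)$, hence
$$\overline{SO}(C_n[K_2]) = 5\sqrt{2}\cdot 2n(n-3) = 10n(n-3)\sqrt{2},$$
whereas the statement asserts $5[2n(n-3)+4]\sqrt{2} = 10n(n-3)\sqrt{2} + 20\sqrt{2}$. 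So your argument, completed honestly, refutes the displayed formula rather than proving it: the corollary as printed is off by the constant $20\sqrt{2}$. Your own proposed cross-check confirms this: $C_n[K_2]$ is $5$-regular on $2n$ vertices, and the $r$-regular corollary gives $\dfrac{(2n)\cdot 5\cdot(2n-1-5)}{\sqrt{2}} = 10n(n-3)\sqrt{2}$ as well. A decisive sanity check is $n=3$: $C_3[K_2] = K_6$, whose Sombor coindex is $0$; the corrected formula gives $0$, while the stated one gives $20\sqrt{2}$.

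The likely source of the error is instructive, and it sits precisely at the spot you flagged as dangerous. Note that $2n(n-3)+4 = 2(n-1)(n-2) = {2n \choose 2} - (5n-4)$, and $5n-4 = n\cdot 1 + (n-1)\cdot 4$ is the composition edge count with $m_1 = n-1$, i.e.\ for the \emph{open} fence $P_n[K_2]$ (path in place of cycle). So the paper appears to have substituted the path's edge count into $\overline{m}$ --- and even then the resulting number would not be the Sombor coindex of $P_n[K_2]$, since that graph is not regular and the factor $5\sqrt{2}$ per non-edge is only an upper bound there. Your proof should therefore conclude with the corrected value $\overline{SO}(C_n[K_2]) = 10n(n-3)\sqrt{2}$ and note the discrepancy with the stated corollary, rather than asserting agreement with it.
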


\section{Conclusion}\label{sec13}
 In the mathematical and chemical literature, several dozens of vertex-degree-based graph invariants (usually referred to as topological indices) have been introduced and extensively studied. We define a new topological index of a graph in this paper, we call it Sombor coindex. We give several properties of the Somber coindex and its relations to the Sombor index, the Zagreb (co)indices, forgotten coindex and other important graph parameters. We also compute the bounds of the Somber coindex of some graph operations and compute the Sombor coindex for some chemical graphs as an application. One could explore further relations between Sombor coindex and other well-known (co)indices. One could also explore Sombor coindex of other graph operations which are of considerable chemical interest, such as splices and links of two or more graphs. Finding the chemical applications of this Sombor index is an attractive task for the near future.
\section*{Acknowledgments}
The second author is supported by the MATRICS project funded by DST-SERB, government of India under Grant no MTR/2017/000403 dated 06/06/2018.

\end{document}